\def\0{\emptyset}
\begin{document}
	\newtheorem{assump}{Assumption}[section]
	\newtheorem{claim}{Claim}[section]
	\newtheorem{theorem}{Theorem}[section]
	\newtheorem{corollary}[theorem]{Corollary}
	\newtheorem{definition}[theorem]{Definition}
	\newtheorem{conj}[theorem]{Conjecture}
	\newtheorem{question}[theorem]{Question}
	\newtheorem{lemma}[theorem]{Lemma}
	\newtheorem{prop}[theorem]{Proposition}
	\newtheorem{Observation}[theorem]{Observation}
	\newtheorem{Case}[theorem]{Case}
	\newenvironment{proof}{\noindent {\bf
			Proof.}}{\rule{2mm}{2mm}\par\medskip}
	\newcommand{\remark}{\medskip\par\noindent {\bf Remark.~~}}
	\newcommand{\pp}{{\it p.}}
	\newcommand{\de}{\em}

\title{\bf Burning numbers of $t$-unicyclic graphs }

\author{Ruiting Zhang, ~Yingying Yu ~and~ Huiqing Liu\footnote {Hubei Key Laboratory of Applied Mathematics, Faculty of Mathematics and Statistics, Hubei University, Wuhan 430062, PR China. Partially supported by NSFC under grant number 11971158, email: hql\_2008@163.com.}}

\date{}

\maketitle \baselineskip 17.1pt

\begin{abstract}
Given a graph $G$, the burning number of $G$ is the smallest integer $k$ for which there are vertices $x_1, x_2,\ldots,x_k$ such that $(x_1,x_2,\ldots,x_k)$ is a burning sequence of $G$.
It has been shown that the graph burning problem is NP-complete,  even for trees with maximum degree three, or linear forests. A $t$-unicyclic graph is a unicycle graph with exactly one vertex of degree greater than $2$. In this paper, we first present the bounds for the burning number of $t$-unicyclic graphs, and then use the burning numbers of linear forests with at most three components to determine the burning number of all $t$-unicyclic graphs for $t\le 2$.

\vskip 0.2cm

{\bf Keywords:}
Burning number; linear forest; unicyclic graph

\vskip 0.2cm

{\bf Mathematics Subject Classification:} 05C57; 05C69

\end{abstract}

\section{Introduction}%

Let $G=(V(G),E(G))$ be a simple graph, and we simply write $|V(G)|=|G|$ and $|E(G)|=||G||$. For any nonnegative integer $k$ and a vertex $u$, the \textit{$k$-th closed neighborhood of $u$}, denoted by $N_k[u]$, is the set of vertices whose distance from $u$ is at most $k$; we denote $N_1[u]$ simply by $N[u]$, and $d(u)=|N(u)|$ is the \textit{degree} of $u$. Let $\Delta(G)=\max\{d(u):u\in V(G)\}$. A vertex of degree $k$ is called a $k$-vertex. For two vertices $u$ and $v$ in $G$, the {\em distance between $u$ and $v$}, denoted by $d_G(u,v)$, and simply write $d(u,v)$, is the number of edges in a shortest path joining $u$ and $v$. The \textit{eccentricity} of a vertex $ v $ is the greatest distance from $v$ to any other vertices.

A tree is a connected acyclic graph. A caterpillar is a tree where deleting all vertices of
degree 1 leaves a path. Let $SP_s (x) (s\ge 3) $ be a
\textit{generalized star} which is a tree with exactly one $s$-vertex $x$ of degree more than 2, and a path
can be seen as an $SP_2(x)$. A \textit{rooted tree} $T(x)$ is a tree with one vertex $ x $ designated as the \textit{root}. The \textit{height} of a rooted tree $T(x)$ is the eccentricity of $x$. A \textit{rooted tree partition} of $ G $ is a collection of rooted trees that are subgraphs of $G$, with the property that the vertex sets of the trees partition $V(G)$.

A unicyclic graph $G$ is a connected graph with $||G||=|G|$. A {\em $t$-unicyclic graph} $U_t(x)$ ($t\ge 1$) is a unicyclic graph in which there exists exactly one $(t+2)$-vertex $x$ of degree more than 2. A cycle can be seen as a $0$-unicyclic graph. We denote a \textit{path/cycle/star} of order $n$ by $P_n/C_n/S_n$.

Let $G_1,G_2,\ldots,G_k$ be $k$-vertex disjoint graphs.
The union of graphs $ G_i ~(1\le i\le k) $, denoted by $ G_1 +\cdots+G_k $ , is the
graph with vertex set
$\bigcup_{i=1}^{k}V (G_i) $ and edge set
$\bigcup_{i=1}^{k}E(G_i)$. For any positive integer $k$, let $[k]$ denote the set $\{0,1,2,\cdots,k-1\}$.

 Bonato-Janssen-Roshanbin \cite{BoJR14} defined {\em graph burning} involving a graph process as follows. Initially, at time round $t=0$ all vertices are unburned. During each time round $t\ge 1$, one new unburned vertex is chosen to burn (if such a vertex exists):  burned vertex remains burned until the end of the process and its unburned neighbors becomes burned. The process ends when all vertices are burned. A graph is called {\em $k$-burnable} if it can be burned by at most $k$ rounds. The {\em burning number} of a graph $G$, denoted by $b(G)$, is the smallest integer $k$ such that $G$ is $k$-burnable. The vertices that are chosen to be burned are referred to as a {\em burning sequence}, and a shortest such sequence is called {\em optimal}. Note that any optimal burning sequences have length $b(G)$. Figure 1 illustrates an optimal burning sequence the path $C_4$ (resp., $C_5$) with vertices $\{v_1, v_2, v_3, v_4\}$ (resp.,  $\{v_1, v_2, v_3, v_4,v_5\}$).

\begin{center}
\setlength{\unitlength}{1mm}
\begin{picture}(100,30)
\thinlines

\put(5,10){\circle*{1}} \put(5,25){\circle*{1}}
\put(20,10){\circle*{1}} \put(20,25){\circle*{1}}

\put(5,10){{\circle{2.5}}}\put(20,25){\circle{2.5}}
\put(5,10){\line(1,0){15}}\put(5,25){\line(1,0){15}}
\put(5,10){\line(0,1){15}}\put(20,10){\line(0,1){15}}

\put(0,9){$v_{1}$}   \put(22,9){$v_{2}$}
\put(0,24){$v_{4}$}  \put(22,24){$v_{3}$}

\put(65,12){\circle*{1}}\put(65,23){\circle*{1}}
\put(76,7.5){\circle*{1}} \put(76,27.5){\circle*{1}}\put(86,17.5){\circle*{1}}

\put(65,12){\circle{2.5}}\put(65,23){\circle{2.5}}\put(86,17.5){\circle{2.5}}

\put(65,12){\line(0,1){11}} \put(65,12){\line(5,-2){11}}\put(65,23){\line(5,2){11}}

\put(86,17.5){\line(-1,1){10}}\put(86,17.5){\line(-1,-1){10}}

\put(59.5,10){$v_{1}$} \put(77,5){$v_{2}$}
\put(88,17){$v_{3}$} \put(74,29){$v_{4}$}  \put(59,22){$v_{5}$}

\put(-12,-1){Figure 1.~Burning $C_4$ and $C_5$ (the open circles represent burned vertices) }

\end{picture}
\end{center}


Burning can be viewed as a simplified model for the spread of social influence in a social network such as Facebook or Twitter  \cite{Kramer}, which is motivated by two other well-known models: the
competitive diffusion game \cite{AlonFe} and Firefighting \cite{Finbow}. It turned out that graph burning is related to several other graph theory problems such as graph bootstrap percolation
\cite{Balogh}, and graph domination \cite{Haynes}.
Recently, the graph burning problem attracted the attention of many researchers and some results are achieved, see
\cite{FiWi18,ReJa20,LaLu16,LiZH19}, \cite{MiPR17}-\cite{DeMo20}, \cite{SiTW18}). For more on graph burning and graph searching, see \cite{AnBo20} and \cite{BoPr17}. The graph burning problem is NP-complete,  even for trees with maximum degree three, or linear forests (i.e., a disjoin union of paths), see \cite{{BBJR17},BBJR18,Rosh16}. So it is interesting to compute the burning number of some special classes of graphs.


Roshanbin \cite{Rosh16} gave the following criterion for a graph to have burning number 2.

\begin{theorem}\cite{Rosh16}
\label{b=2}
A graph $G$ of order $n$ satisfies $b(G) =2$ if and only if $|G|\ge 2$ and $n-2\le \Delta(G)\le n-1$.
\end{theorem}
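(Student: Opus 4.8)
The plan is to reduce the statement to a clean set-covering condition on closed neighborhoods. The key observation is that a length-$2$ sequence $(x_1,x_2)$ burns $G$ precisely when $N[x_1]\cup\{x_2\}=V(G)$: after the first round only $x_1$ is burned, and during the second round the fire spreads from $x_1$ to $N(x_1)$ while $x_2$ is newly ignited, so the set burned at the end of round $2$ is exactly $N_1[x_1]\cup N_0[x_2]=N[x_1]\cup\{x_2\}$. I would state this as the first step, either by direct appeal to the burning process or by invoking the general fact that $(x_1,\ldots,x_k)$ burns $G$ iff $\bigcup_{i=1}^{k}N_{k-i}[x_i]=V(G)$, specialized to $k=2$.

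For the reverse implication, assume $|G|\ge 2$ and $n-2\le\Delta(G)\le n-1$. I would choose $x_1$ to be a vertex of maximum degree, so that $|V(G)\setminus N[x_1]|=n-1-d(x_1)\le n-1-(n-2)=1$. Thus at most one vertex lies outside $N[x_1]$; letting $x_2$ be that vertex (or any vertex, if $N[x_1]=V(G)$) gives $N[x_1]\cup\{x_2\}=V(G)$, so $(x_1,x_2)$ is a burning sequence and $b(G)\le 2$. Since $|G|\ge 2$, the graph cannot be burned in a single round (round one burns only $x_1$, leaving at least one vertex unburned), so $b(G)\ge 2$ and hence $b(G)=2$.

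For the forward implication, assume $b(G)=2$. Because $b(G)\ne 1$, the graph has at least two vertices, giving $|G|\ge 2$. Taking an optimal burning sequence $(x_1,x_2)$ and applying the characterization from the first step yields $N[x_1]\cup\{x_2\}=V(G)$, whence $V(G)\setminus N[x_1]\subseteq\{x_2\}$ and therefore $n-1-d(x_1)=|V(G)\setminus N[x_1]|\le 1$, i.e.\ $d(x_1)\ge n-2$. This forces $\Delta(G)\ge n-2$, and the bound $\Delta(G)\le n-1$ is automatic for any simple graph on $n$ vertices, completing the inequality $n-2\le\Delta(G)\le n-1$.

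The argument is essentially a direct counting computation once the right reformulation is in place, so I do not anticipate a serious obstacle; the only point requiring care is the first step, namely justifying that the effect of a two-round burning sequence is captured exactly by $N[x_1]\cup\{x_2\}$ rather than by a larger or smaller set, since this translation between the dynamic process and the static neighborhood condition is what makes the degree bound emerge cleanly.
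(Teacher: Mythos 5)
Your argument is correct. Note that the paper does not prove this statement at all---it is imported from \cite{Rosh16} as a cited result---so there is no in-paper proof to compare against; your reduction to the set equation $N_1[x_1]\cup N_0[x_2]=V(G)$ is exactly the specialization of Theorem~\ref{b-sequ} to $k=2$ and is the standard route. The one point worth tightening is that Theorem~\ref{b-sequ} also requires $d(x_1,x_2)\ge 1$, i.e.\ $x_1\ne x_2$; your parenthetical ``or any vertex, if $N[x_1]=V(G)$'' should read ``any \emph{other} vertex,'' which exists precisely because $|G|\ge 2$, so the gap is cosmetic.
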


Bonato {\em at al.} in \cite{BoJR14} studied the burning number of paths and cycles.

\begin{theorem} \cite{BoJR14}
\label{Pn}
For a path $P_n$ or a cycle $C_n$, we have that $b(P_n)=b(C_n) =\lceil\sqrt{n}\rceil$. Moreover, if a graph $G$ has a Hamiltonian path, then $b(G) \le\lceil\sqrt{|G|}\rceil$.
\end{theorem}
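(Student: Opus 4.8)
The plan is to route everything through one reformulation: a burning sequence can be read as a covering of $V(G)$ by balls of decreasing radii. If $(x_1,\ldots,x_k)$ is a burning sequence, then the source $x_i$ ignited in round $i$ has spread to exactly $N_{k-i}[x_i]$ by the final round $k$, so $G$ burns in $k$ rounds if and only if $V(G)=\bigcup_{i=1}^{k}N_{k-i}[x_i]$. I would first record this equivalence, paying attention to the one mildly subtle direction, that mere coverage already forces $b(G)\le k$: if the chosen balls cover $V(G)$ but an intended source $x_i$ happens to be burning already when round $i$ arrives, one simply ignites an arbitrary unburned vertex instead. This never shrinks the burned region, and in every case $x_i$ is burning by round $i$, so any $v$ with $d(v,x_i)\le k-i$ is burned by round $k$. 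Thus the available radii $k-1,k-2,\ldots,1,0$ act as a ``budget'', and the whole theorem becomes a statement about covering paths and cycles with these balls.

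For the lower bounds I would use that in a path or a cycle the ball $N_r[u]$ contains at most $2r+1$ vertices. Hence if $b(P_n)=k$ (resp. $b(C_n)=k$), an optimal sequence yields a covering, and summing the budget gives $n\le\sum_{r=0}^{k-1}(2r+1)=k^2$, so $k\ge\sqrt{n}$ and therefore $k\ge\lceil\sqrt{n}\rceil$. This settles $b(P_n),b(C_n)\ge\lceil\sqrt{n}\rceil$.

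For the matching upper bound on $P_n$, set $k=\lceil\sqrt{n}\rceil$, so $k^2\ge n$. I would cut the path $v_1v_2\cdots v_n$ into consecutive blocks of sizes $2k-1,2k-3,\ldots,3,1$ (allowing the last blocks to run past $v_n$) and place the source of radius $k-i$ at the midpoint of the $i$-th block, so that its ball covers precisely that block; since the block sizes total $k^2\ge n$, the balls cover $V(P_n)$, and by the equivalence $b(P_n)\le\lceil\sqrt{n}\rceil$. The ``moreover'' statement follows from the same construction: if $G$ has a Hamiltonian path, relabel $V(G)=\{v_1,\ldots,v_n\}$ along it, so that $d_G(v_a,v_b)\le|a-b|$; then each ball $N_r[v_j]$ in $G$ contains the corresponding path-ball, so the identical block construction covers $V(G)$ and $b(G)\le\lceil\sqrt{|G|}\rceil$. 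Finally, deleting any edge of $C_n$ exhibits a Hamiltonian path, so $b(C_n)\le\lceil\sqrt{n}\rceil$, and combined with the lower bound this gives $b(C_n)=\lceil\sqrt{n}\rceil$.

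I expect the only genuinely delicate point to be the covering equivalence, and within it the direction that coverage alone implies $k$-burnability, where one must argue carefully that a source which is pre-burned when its round arrives causes no loss. Once that lemma is in place, both bounds reduce to the arithmetic identity $\sum_{r=0}^{k-1}(2r+1)=k^2$. A secondary, purely technical nuisance is the exact centering of the blocks (an off-by-one in choosing each midpoint) and checking the boundary blocks that spill past $v_n$; I would treat these as routine verifications rather than as a real obstacle.
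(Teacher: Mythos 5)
Your proposal is correct, and it is essentially the standard argument for this cited result: the paper itself states no proof of Theorem~\ref{Pn} (it is quoted from \cite{BoJR14}), but both the source and the present paper work with exactly your covering reformulation (recorded here as Theorem~\ref{b-sequ}) together with the bound $|N_r[u]|\le 2r+1$ on paths and cycles for the lower bound and the block/Hamiltonian-path construction for the upper bound. The two points you flag as delicate --- that coverage alone gives $k$-burnability via re-igniting an arbitrary unburned vertex, and the handling of blocks spilling past $v_n$ --- are indeed the only places needing care, and your treatment of both is sound.
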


A graph $G$ of order $n$ is {\em well-burnable} if $b(G)\le \lceil\sqrt{n}\rceil$. Bonato {\em at al.} \cite{BoJR14} conjectured that all connected graphs are well-burnable, and they showed that $P_n$ and $C_n$ are well-burnable.  As showed in \cite{LiHH20} and independently in \cite{HiTK}, the caterpillar is well-burnable. Bonato and Lidbetter \cite{BoLi18+} proved that the generalized star is also well-burnable. Moreover, Tan and Teh \cite{TaWe20} showed that the bound $ \lceil\sqrt{n}\rceil$ is tight for generalized stars.

\begin{theorem}
\cite{BoJR16,TaWe20}
\label{spider}
Let $G$ be a generalized star of order $n$, then $b(G)\le \lceil\sqrt{n}\rceil$.
\end{theorem}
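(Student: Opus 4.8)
The plan is to prove that a generalized star of order $n$ is well-burnable, i.e. $b(SP_s(x)) \le \lceil\sqrt{n}\rceil$. Let $m = \lceil\sqrt{n}\rceil$, so $n \le m^2$. The natural strategy is to exhibit a burning sequence of length $m$ by a greedy/packing argument on the legs (the paths emanating from the center $x$). Recall from Theorem~\ref{Pn} that a path or cycle has burning number $\lceil\sqrt{\cdot}\rceil$, and the underlying intuition there is that a single burning source placed at time $m-i$ burns a ball of radius $i$, covering $2i+1$ vertices on a path; summing $\sum_{i=0}^{m-1}(2i+1) = m^2$ vertices exactly matches the path bound. I would try to reuse exactly this counting, adapted to the star topology.

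First I would set up the legs as paths $L_1, L_2, \ldots, L_s$ hanging off the center $x$, with the center itself available as a convenient burning location. The key observation I would exploit is that when a source is placed \emph{at the center} $x$ at round $m-i$, its radius-$i$ ball reaches $i$ vertices into \emph{every} leg simultaneously, whereas a source placed deep inside a single leg behaves just like a source on a path. So the idea is: devote the largest, latest-burning source (radius $m-1$) to the center $x$, which scoops up the first $m-1$ vertices of all legs at once, and then treat the remaining, truncated legs as a disjoint collection of paths to be burned by the remaining $m-1$ sources. The remaining sources have radii $0, 1, \ldots, m-2$, giving a total path-covering capacity of $\sum_{i=0}^{m-2}(2i+1) = (m-1)^2$ vertices, and I would argue that this suffices to cover everything not already reached from the center.

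The heart of the argument is the packing step: after removing the center-ball, the leftover is a linear forest (disjoint paths), and I must show the $m-1$ remaining balls of radii $0,\ldots,m-2$ can be assigned to cover all its vertices. I would proceed greedily, always assigning the next-largest unused radius to the longest remaining uncovered path-segment, placing the source at distance equal to its radius from the tip so it covers a contiguous block of length up to $2r+1$. The bookkeeping needs the total remaining vertex count to stay within the running capacity; here I expect to invoke $n \le m^2$ together with the fact that the center already absorbed a full radius-$(m-1)$ ball. I anticipate the main obstacle to be the boundary cases where the greedy assignment leaves a short stub that nonetheless requires its own source, or where many very short legs force an inefficient use of large-radius balls; handling these requires care in choosing which leg receives the center-ball's reach versus a dedicated interior source, and possibly a separate argument when $s$ is large relative to $m$. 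An alternative cleaner route, which I would fall back on if the direct greedy packing gets stuck, is to find a spanning structure or an ordering of the legs that lets me apply the path bound of Theorem~\ref{Pn} almost as a black box, reducing the star to a single long walk whose length is controlled by $n$.
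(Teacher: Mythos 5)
This theorem is quoted by the paper from the literature (Bonato--Lidbetter and Tan--Teh); the paper contains no proof of it, so the only question is whether your argument stands on its own. It does not: there is a genuine gap, and in fact your primary strategy fails outright on a concrete family of spiders. Write $m=\lceil\sqrt{n}\rceil$ and take $n=m^2$ with $m\ge 4$, and let $G$ be the spider with three legs of lengths $m^2-3$, $1$, $1$. Your plan dedicates the radius-$(m-1)$ ball to the center $x$; but $N_{m-1}[x]$ then contains only $1+2+(m-1)=m+2$ vertices, because almost all of that ball's capacity is wasted on the two one-vertex legs. What remains is a single path on $m^2-m-2$ vertices, while the remaining sources of radii $0,1,\ldots,m-2$ can cover at most $\sum_{i=0}^{m-2}(2i+1)=(m-1)^2=m^2-2m+1$ vertices, and $m^2-m-2>(m-1)^2$ for $m\ge 4$. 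So ``always give the center the largest ball'' cannot work; the correct arguments in the cited papers instead place the early, large-radius sources deep inside the longest arms (at distance equal to their radius from the tip, so each consumes a full block of $2i+1$ vertices of that arm) and reserve a suitably sized ball for the center only at the right stage of an induction.

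Two further points. First, even in cases where your vertex count balances, the packing step you defer to a ``greedy argument'' is exactly where the difficulty lives: covering a disjoint union of paths by balls of radii $0,\ldots,m-2$ is not guaranteed by the capacity bound $\sum(2i+1)=(m-1)^2$ alone --- the paper's own Theorems \ref{f2} and \ref{f3} exhibit path forests whose burning number strictly exceeds $\lceil\sqrt{\cdot}\rceil$, so a counting inequality cannot substitute for an explicit assignment. Second, your fallback of reducing the spider to ``a single long walk'' and invoking Theorem \ref{Pn} as a black box also does not go through: a generalized star with $s\ge 3$ has no Hamiltonian path, and any spanning walk revisits edges, so the path bound does not transfer. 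As written, the proposal identifies the right quantities to track but does not contain a proof.
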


Recently, Liu {\em et. al} \cite{LiZH20} determined the burning numbers of linear forests with at most three components.

\begin{theorem}\label{f2}\cite{LiZH20}
Let $G=P_{a_1}+P_{a_2}$ with $a_1\geq a_2\geq 1$, and let $J(t)=\{(t^2-2,2):t\ge 2$ is an integer$\}$, then
$$b(G)= \left\{
\begin{array}{ll}
\left\lceil\sqrt{a_1+a_2}\right\rceil+1, & \mbox{if~} (a_1,a_2)\in J(t) ;\\
\lceil\sqrt{a_1+a_2}\rceil, & \mbox{~}  otherwise.
\end{array}\right.
$$
\end{theorem}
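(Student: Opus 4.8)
The plan is to reduce the burning of a two-path forest to a purely arithmetic question about subset sums of the odd numbers, and then to read off the exceptional family $J(t)$ from a clean gap characterization of those sums. I would start from the standard ball-covering reformulation: a graph $G$ satisfies $b(G)\le k$ if and only if there are vertices $x_1,\dots,x_k$ with $\bigcup_{i=1}^{k}N_{k-i}[x_i]=V(G)$, since $x_i$ is lit in round $i$ and its fire then has $k-i$ rounds left to spread. Specializing to $G=P_{a_1}+P_{a_2}$, every ball $N_r[x]$ lies entirely inside the path containing $x$ and meets at most $2r+1$ of its vertices, while on a single path $P_m$ a family of radii $r_1,\dots,r_j$ can be placed to tile an interval, so $P_m$ is coverable by those radii \emph{iff} $\sum_i(2r_i+1)\ge m$. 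Assigning each of the $k$ available radii $\{0,1,\dots,k-1\}$ to one of the two paths then yields the equivalence
\[
b(G)\le k \iff \exists\, B\subseteq\{0,1,\dots,k-1\}\ \text{with}\ a_2\le \sum_{r\in B}(2r+1)\le k^2-a_1 ,
\]
where the upper bound is just the complementary requirement $\sum_{r\notin B}(2r+1)\ge a_1$, using $\sum_{r=0}^{k-1}(2r+1)=k^2$.

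The combinatorial heart is then a lemma on subset sums: \emph{for $k\ge 2$, the subset sums of $\{1,3,5,\dots,2k-1\}$ are exactly the integers in $[0,k^2]$ apart from $2$ and $k^2-2$.} That $2$ and $k^2-2$ are unattainable is immediate, since one odd summand gives an odd value and any two distinct odd summands sum to at least $1+3=4$. For the positive part I would induct on $k$: the set for $k+1$ is obtained by adjoining $2k+1$, so its subset sums are the old sums together with the old sums shifted by $2k+1$, and a short check (after verifying the small cases by hand) shows the shifted copy exactly fills the old upper gap $k^2-2$ while producing the new gaps at $2$ and $(k+1)^2-2$.

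Combining the two ingredients finishes the theorem. The counting bound $\sum_{i=0}^{k-2}(2i+1)=(k-1)^2<a_1+a_2$ gives $b(G)\ge\lceil\sqrt{a_1+a_2}\rceil=:k$, and the interval $[a_2,\,k^2-a_1]$ is nonempty because its length $k^2-(a_1+a_2)$ is $\ge 0$. By the subset-sum lemma this interval contains an attainable value—so $b(G)=k$—unless every integer it contains lies in $\{2,k^2-2\}$; being a block of consecutive integers, it must then be the single point $\{2\}$ (the point $\{k^2-2\}$ collapses, via $a_1\ge a_2$, to the same situation with $k=2$). This occurs exactly when $a_2=2$ and $a_1=k^2-2$, i.e. $(a_1,a_2)\in J(k)$, whence $a_1+a_2=k^2$. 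In that exceptional case I would rerun the argument with $k+1$ radii: the interval becomes $[2,2k+3]$, which contains the attainable value $3$, giving $b(G)\le k+1$, and together with the lower bound this yields $b(G)=k+1=\lceil\sqrt{a_1+a_2}\rceil+1$.

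The step I expect to be the main obstacle is the subset-sum lemma, specifically making the induction airtight in the small cases—where the two gaps $2$ and $k^2-2$ nearly or exactly coincide—and confirming that adjoining $2k+1$ fills the old upper gap rather than leaving a fresh hole. Once that lemma is in hand, everything else is bookkeeping driven by the single identity $\sum_{r=0}^{k-1}(2r+1)=k^2$.
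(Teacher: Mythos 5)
This statement is quoted by the paper from reference \cite{LiZH20} and is not proved anywhere in the present manuscript, so there is no internal proof to compare against; your argument therefore has to stand on its own, and it does. The reduction is sound: the ball-covering sufficiency for $b(G)\le k$ is standard, on a path a multiset of radii covers $P_m$ if and only if $\sum_i(2r_i+1)\ge m$, and splitting the radius set $\{0,\dots,k-1\}$ between the two components gives exactly your window condition $a_2\le\sum_{r\in B}(2r+1)\le k^2-a_1$. Your subset-sum lemma is correct: $2$ is unattainable by parity plus the bound $1+3=4$, $k^2-2$ is unattainable by passing to the complementary subset, and the induction adjoining $2k+1$ does fill the old gap at $k^2-2$ (since $k^2-2\ge 2k+1$ for $k\ge 3$, with $k=2,3$ checked by hand) while creating the new gaps at $2$ and $(k+1)^2-2$. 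The endgame is also right: the window is a nonempty block of consecutive integers, so it misses all attainable values only when it is the singleton $\{2\}$ (the case $\{k^2-2\}$ forcing $a_1\ge a_2$ collapses to $k=2$), i.e.\ exactly on $J(k)$, where one extra radius supplies the attainable value $3$. Compared with the case-by-case analysis one expects in \cite{LiZH20} (and which the sibling three-path result, Theorem \ref{f3} with its lists $J^1,\dots,J^5$, suggests), your route has the advantage of explaining \emph{why} the exceptional family is precisely $\{(t^2-2,2)\}$: it is the unique gap pattern in the subset sums of consecutive odd numbers. The only point worth making fully explicit in a written version is the covering-implies-burnable direction (Theorem \ref{b-sequ} as stated also demands $d(x_i,x_j)\ge j-i$, but the covering condition alone suffices for the upper bound by the usual greedy relabelling), and the degenerate placement when a ball's diameter exceeds the remaining path.
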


Let $J^{1}=\{(a_1,a_2,a_3):(a_2,a_3)\in D_1$, $\sum_{i=1}^3a_i= t^2-3$ for some $t$\},

\hspace{0.74cm}$J^{2}=\{(a_1,a_2,a_3):(a_2,a_3)\in D_1\cup D_2$, $\sum_{i=1}^3a_i= t^2-2$ for some $t$\},

\hspace{0.74cm}$J^{3}=\{(a_1,a_2,a_3):(a_2,a_3)\in \cup_{i=1}^3D_i$ and $\sum_{i=1}^3a_i= t^2-1$ for some $t$\}$\cup \{(11,11,2)\}$,

\hspace{0.74cm}$J^{4}=\{(a_1,a_2,a_3):a_3=2$ or $(a_2,a_3)\in \cup_{i=1}^{4}D_i$, $\sum_{i=1}^3a_i= t^2$  for some $t$\}.

\hspace{0.74cm}$J^{5}=\{(13,11,1),(11,11,3),(22,13,1),(19,13,4),(17,13,6),(15,13,8),$

\hspace{1.78cm} $(13,13,10),(17,15,4),(15,15,6),(30,15,4),(28,15,6),(26,15,8),$

\hspace{1.78cm} $(19,15,15),(28,17,4),(26,17,6),(17,17,15),(26,19,4),(43,17,4),$

\hspace{1.78cm} $(41,17,6),(30,17,17),(41,19,4),(30,30,4),(58,19,4)\}$, \\
where $D_1=\{(2,2)\}$, $D_2=\{(3,2)\}$, $D_3=\{(1,1),(3,3),(4,2),(5,5)\}$ and $D_4=$ $\{(2,1),(4,1),(4,3),(4,4),(6,1),(6,4),(6,5),(6,6),(7,7),(8,4),(8,6),(10,4)\}$.

\begin{theorem}\label{f3}\cite{LiZH20}
Let $G=P_{a_1}+P_{a_2}+P_{a_3}$ with $a_1\geq a_2\geq a_3\geq1$. Then
$$b(G)= \left\{
\begin{array}{ll}
\lceil\sqrt{a_1+a_2+a_3}\rceil+1, & \mbox{if~} (a_1,a_2,a_3)\in J^{1}\cup J^{2}\cup J^{3}\cup J^{4}\cup J^{5};\\
\lceil\sqrt{a_1+a_2+a_3}\rceil, & \mbox{~}  otherwise.
\end{array}\right.
$$
\end{theorem}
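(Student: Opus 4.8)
My plan is to convert graph burning into an interval-covering problem and then decide, triple by triple, whether one extra round is needed; the entire difficulty concentrates in a finite case analysis driven by the \emph{slack} $s=k^2-n$, where $n=a_1+a_2+a_3$ and $k=\lceil\sqrt n\rceil$. The first step is the standard reformulation. If $(x_1,\dots,x_m)$ is a burning sequence of length $m$, then at the end of the process $x_i$ has burned exactly the ball $N_{m-i}[x_i]$, so a sequence of length $m$ exists if and only if the balls $N_{m-1}[x_1],\dots,N_1[x_{m-1}],N_0[x_m]$ cover $V(G)$. In a disjoint union of paths each such ball lies inside one path, where it is an interval on $2(m-i)+1$ consecutive vertices (fewer only when clipped at an end). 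Since a single path $P_a$ is covered by intervals of odd lengths $\ell_1,\dots,\ell_p$ exactly when $\sum_j\ell_j\ge a$ (lay them consecutively; the converse is a counting triviality), I obtain the working criterion: $b(G)\le m$ if and only if $\{1,3,5,\dots,2m-1\}$ can be split into three groups $S_1,S_2,S_3$ with $\sum_{\ell\in S_j}\ell\ge a_j$ for each $j$.

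Next I would establish the two bounds. Because $m$ sources burn at most $1+3+\cdots+(2m-1)=m^2$ vertices, $b(G)\ge\lceil\sqrt n\rceil=k$; and the criterion with $m=k+1$ (total length $(k+1)^2$, giving slack at least $2k+1$) yields $b(G)\le k+1$ by a short constructive partition exploiting the ample slack. Hence $b(G)\in\{k,k+1\}$, and everything reduces to feasibility at $m=k$. Using $a_1\ge a_2\ge a_3$, the criterion at $m=k$ is equivalent to covering the two smaller paths $P_{a_2}+P_{a_3}$ by disjoint sets of distinct odd balls from $\{1,\dots,2k-1\}$ with total waste at most $s$: the balls left over then sum to at least $a_1$ (their total being $a_1+s$ minus the waste spent on the short paths) and cover $P_{a_1}$ automatically. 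Thus $b(G)=k$ precisely when such a low-waste covering of the two short paths exists, and $b(G)=k+1$ otherwise.

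I would then organise the analysis by $s$. Let $f(a_2,a_3)$ be the least total waste needed to cover $P_{a_2}+P_{a_3}$ by distinct odd numbers, ignoring for the moment the cutoff $2k-1$. The facts to prove are: (i) no subset of distinct odd numbers sums to $2$, so covering $P_2$ always wastes at least $1$, which places every triple with $a_3=2$ in the slack-$0$ family $J^4$; (ii) $f(a_2,a_3)$ can exceed a prescribed threshold only for finitely many small pairs, and a direct enumeration identifies them as $D_1$ ($f=4$), $D_2$ ($f=3$), $D_3$ ($f=2$), $D_4$ ($f=1$), which yields $J^1$ (slack $3$), $J^2$ (slack $2$), $J^3$ (slack $1$) and $J^4$ (slack $0$); and (iii) the explicit list $J^5$, together with the appended triple $(11,11,2)$ in $J^3$, catches the small-$k$ regime, in which the waste-minimising assignment would need a ball longer than the available maximum $2k-1$, so the truncated set cannot realise the required exact sums — for instance $(13,11,1)$ with $k=5$ fails because no subset of $\{3,5,7,9\}$ sums to $11$.

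The main obstacle is parts (ii) and (iii): proving that the bad pairs are \emph{exactly} those in $\bigcup_i D_i$ together with the sporadic triples in $J^5$, and nothing more. This requires a sharp constructive upper bound producing, for every pair outside the listed families and for every admissible $k$, an explicit covering of the two short paths with waste at most $s$; pinning down how large $a_2,a_3$ and how severely truncated the ball set $\{1,3,\dots,2k-1\}$ may be before such a covering is guaranteed is exactly where the long, essentially computational casework lives, and it is what forces the exceptional set to be finite and equal to the stated union.
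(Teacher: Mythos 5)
Theorem \ref{f3} is quoted from \cite{LiZH20}; the present paper contains no proof of it, so there is nothing internal to compare against. Judged on its own terms, your reduction is the standard and correct one (and is essentially the route taken in \cite{LiZH20} and the related path-forest papers): burning $P_{a_1}+P_{a_2}+P_{a_3}$ in $m$ rounds is equivalent to partitioning the odd lengths $\{1,3,\dots,2m-1\}$ into three groups whose sums dominate $a_1,a_2,a_3$, and for $m=k=\lceil\sqrt n\rceil$ this becomes a minimum-waste covering of the two short paths within slack $s=k^2-n$. Your spot checks are accurate: the values $f=4,3,2,1$ for $D_1,\dots,D_4$ are right, the $a_3=2$ observation is right, and $(13,11,1)$ and $(11,11,2)$ are genuine truncation failures.

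The gap is that the proposal stops exactly where the theorem begins. The content of Theorem \ref{f3} is not the reduction but the assertion that the exceptional set is \emph{exactly} $J^1\cup\cdots\cup J^5$, and you explicitly defer the two directions of that claim --- (a) that every pair outside $\bigcup_i D_i$ admits a waste-$f$ covering meeting the slack bound for \emph{every} admissible $k$, and (b) that the truncated instances are precisely $J^5\cup\{(11,11,2)\}$ and nothing else --- to an unexecuted ``finite case analysis.'' Neither finiteness nor the exact lists are established; in particular you give no argument bounding the pairs $(a_2,a_3)$ with $f(a_2,a_3)\ge 1$ (note $D_4$ already contains pairs as large as $(10,4)$, so this is not a two-line enumeration), and no argument bounding the values of $k$ for which truncation can bite. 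Two smaller soft spots: the single-path criterion ``coverable iff $\sum\ell_j\ge a$'' must also be checked against the distance condition $d(x_i,x_j)\ge j-i$ of Theorem \ref{b-sequ} (routine, but when the total length strictly exceeds $a$ the naive consecutive placement pushes centers off the path and needs a word of justification), and the upper bound $b(G)\le k+1$ is asserted from ``ample slack'' without the constructive partition, which is itself a nontrivial lemma for three components. As it stands this is a correct plan of attack, not a proof.
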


Let $U_{g}^{a_1,\ldots, a_t}$ be a \textit{unicyclic graph} of order $n$ obtained from a cycle $C_{g}=v_1v_2\cdots v_gv_1$ by attaching $t$ paths of lengths $a_{1},a_{2},\ldots,a_{t} $ at the vertex $v_g$,  where $a_1\ge a_2\ge \cdots \ge a_{t}\geq 1$ (see Figue 2). Then $P_{a_i}$ ($1\le i\le t$) are called the {\em arms}. Denote $\mathcal U_{n,g}^{(t)}=\{U_{g}^{a_1,\ldots, a_t}:n=g+\sum_{i=1}^ta_i\}$ for $1\le t\le n-g$.

\begin{center}
\setlength{\unitlength}{1.0mm}
\begin{picture}(70,40)
\thinlines

\put(30,20){\circle*{2}}\put(34.4,9.4){\circle*{2}}
\put(34.4,30.6){\circle*{2}}\put(45,5){\circle*{2}}
\put(45,35){\circle*{2}}\put(55.6,9.4){\circle*{2}}
\put(55.6,30.6){\circle*{2}}\put(60,20){\circle*{2}}

\qbezier(30,20)(31,34)(45,35)\qbezier(30,20)(31,6)(45,5)
\qbezier(45,35)(59,34)(60,20)
\put(32,19){$v_g$}\put(36.5,9){$v_{g-1}$}
\put(31,33){$v_1$}\put(43,38){$v_2$}

\put(59.7,16.7){\circle*{0.75}}\put(58.9,14){\circle*{0.75}}
\put(57.8,11.3){\circle*{0.75}}\put(47.95,5.2){\circle*{0.75}}
\put(50.3,6){\circle*{0.75}}\put(53,7.2){\circle*{0.75}}

\put(25,15){\circle*{2}}\put(25,25){\circle*{2}}

\put(15,15){\circle*{0.75}}\put(15,20){\circle*{0.75}}
\put(15,25){\circle*{0.75}}

\put(15,5){\circle*{2}}\put(15,35){\circle*{2}}

\put(19,31){\circle*{0.75}} \put(21,29){\circle*{0.75}}
\put(20,30){\circle*{0.75}}

\put(20.3,10.3){\circle*{0.75}} \put(21.3,11.2){\circle*{0.75}}
\put(19.3,9.3){\circle*{0.75}}

\put(14.5,4.5){\line(1,1){4}}\put(14.5,35.5){\line(1,-1){4}} \put(30,20){\line(-1,1){8}} \put(30,20){\line(-1,-1){8}}
\put(6,19){$t~\left\{\rule{0mm}{17mm}\right.$}

\put(25,-3){Figue 2. $U_{g}^{a_1,\ldots, a_t}$} \put(45,19){$C_g$} \put(20,32){$P_{a_1}$} \put(20,7){$P_{a_t}$}

\end{picture}
\end{center}

In this paper, we show that the burning number of all unicyclic graphs in $\mathcal U_{n,g}^{(t)}$ of order $n=q^2+r$ with $1\le t\le 2$ is either $q$ or $q+1$, where $1\leq r\leq 2q+1$. Furthermore, we find all the sufficient conditions for all unicyclic graphs in $\mathcal U_{n,g}^{(t)}$ ($1\le t\le 2$) to have burning number $q$ or $q+1$, respectively.

\section{Preliminaries}


We first cite the following results about graph burning.

\begin{theorem}  \cite{BoJR14,BoJR16,Rosh16}
\label{b-sequ}
In a graph $G$, the sequence $(x_1, x_2, \ldots, x_k)$ forms
a burning sequence if and only if, for each pair $i$
and $j$, with $1 \leq i <j \leq  k$, $d(x_i, x_j)\geq  j -i$, and the following set equation holds:
$N_{k-1}[x_1]\cup N_{k-2}[x_2]\cup\cdots\cup N_{1}[x_{k-1}]\cup N_{0}[x_k]=V(G)$.
\end{theorem}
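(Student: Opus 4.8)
The plan is to follow the fire round by round, record the set of burned vertices, and reduce both stated conditions to a single closed-form description of what has burned. For $0\le t\le k$ let $B_t$ denote the set of vertices burned at the end of round $t$, so $B_0=\emptyset$. Treating the placement of the new source $x_t$ and the one-step spread of the already-burning set as occurring together within round $t$, the process is governed by the recursion $B_t=N[B_{t-1}]\cup\{x_t\}$, where $x_t$ is required to be unburned at the start of the round, that is, $x_t\notin B_{t-1}$. The key step is to establish, by induction on $t$, the explicit formula
\[
B_t=\bigcup_{i=1}^{t}N_{t-i}[x_i].
\]
The base case $B_1=\{x_1\}=N_0[x_1]$ is immediate. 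For the inductive step I would use that $N[\cdot]$ distributes over unions and that $N[N_m[u]]=N_{m+1}[u]$: applying $N[\cdot]$ to $B_{t-1}=\bigcup_{i=1}^{t-1}N_{(t-1)-i}[x_i]$ raises every radius by one, and adjoining $\{x_t\}=N_0[x_t]$ then gives the displayed formula.

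With this formula both directions are short. For necessity, assume $(x_1,\dots,x_k)$ is a burning sequence. Since at round $j$ the vertex $x_j$ is a new unburned vertex, $x_j\notin B_{j-1}=\bigcup_{i<j}N_{(j-1)-i}[x_i]$, hence $x_j\notin N_{(j-1)-i}[x_i]$ for each $i<j$, which says $d(x_i,x_j)>(j-1)-i$, i.e. $d(x_i,x_j)\ge j-i$. That the process burns everything in $k$ rounds means $B_k=V(G)$, which is exactly the set equation $N_{k-1}[x_1]\cup\cdots\cup N_0[x_k]=V(G)$.

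For sufficiency I would run the same chain in reverse. The inequalities $d(x_i,x_j)\ge j-i$ for all $i<j$ guarantee $x_j\notin B_{j-1}$ at every round $j$, so each $x_j$ is a legal choice of a new unburned source and $(x_1,\dots,x_k)$ is a valid run of the burning process; the set equation then reads $B_k=V(G)$, so all vertices are burned by round $k$ and the sequence is a burning sequence.

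Since everything is bookkeeping there is no deep obstacle, but two points need care. First, ``new unburned vertex'' must be read as unburned at the \emph{start} of the round (membership tested against $B_{j-1}$, not against $N[B_{j-1}]$); reading it as unburned after that round's spread would yield the strictly stronger requirement $d(x_i,x_j)\ge j-i+1$ and make the statement false, so fixing the convention is essential. Second, the two conditions are genuinely independent: the distance inequalities only certify that each source is available to be chosen, while the set equation only certifies coverage, and a sequence may satisfy either one alone. Keeping the two roles of $B_{j-1}$---the set forbidden to the next source, and the set already accounted for toward coverage---cleanly separated is what the single formula for $B_t$ makes transparent.
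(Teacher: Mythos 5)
Your proof is correct. Note that the paper itself gives no proof of this statement---it is quoted as a known result from \cite{BoJR14,BoJR16,Rosh16}---but your argument (induction on the round to show the burned set after round $t$ equals $\bigcup_{i=1}^{t}N_{t-i}[x_i]$, from which the distance inequalities encode the legality of each new source and the set equation encodes coverage) is exactly the standard derivation given in those references, and your remark about testing ``unburned'' against $B_{j-1}$ rather than $N[B_{j-1}]$ correctly pins down the timing convention needed for the bound $d(x_i,x_j)\ge j-i$ rather than $j-i+1$.
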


\begin{theorem}\cite{BoJR16}
\label{tree-partition}
Burning a graph $G$ in $k$ steps is equivalent to finding a rooted tree partition $\{T_1,T_2,\cdots, T_k\}$, with heights at most $(k-1), (k-2), . . . , 0$, respectively, such
that for every $1 \le i, j\le k$, the distance between the roots of $T_i$ and $T_j$ is at least $|i-j|$.
\end{theorem}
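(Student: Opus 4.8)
The plan is to establish the equivalence in both directions, leaning on the burning-sequence characterization of Theorem~\ref{b-sequ}. Throughout, for a candidate sequence of roots I will use the potential function $r(v) = \min_{1\le i\le k}\big(i + d(x_i,v)\big)$, which records the round at which $v$ first catches fire; note that $r$ is $1$-Lipschitz with respect to the graph metric, since it is a minimum of the $1$-Lipschitz functions $v \mapsto i + d(x_i,v)$.

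For the direction ``burning $\Rightarrow$ tree partition'', suppose $(x_1,\dots,x_k)$ is a burning sequence. First I would show that $r(x_i)=i$ for each $i$: the inequality $r(x_i)\le i$ is immediate, and the reverse follows from the distance condition $d(x_i,x_j)\ge|i-j|$ of Theorem~\ref{b-sequ}, which forces $j+d(x_j,x_i)\ge i$ for every $j$. Next, to each non-source vertex $v$ (so that $r(v)\ge 2$) I would assign a parent: taking a source $x_{i^*}$ realizing the minimum in $r(v)$ and the neighbor $u$ of $v$ one step closer to $x_{i^*}$ along a shortest path, the Lipschitz bound gives $r(u)=r(v)-1$, so $u$ is a legitimate parent. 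Since $r$ strictly decreases along parent links and the only vertices with no parent are the sources, the parent pointers split $V(G)$ into rooted trees $T_1,\dots,T_k$ with $T_i$ rooted at $x_i$. The depth of $v$ in its tree $T_i$ equals $r(v)-i\le k-i$, so $T_i$ has height at most $k-i$; and the required root-distance bound $d(x_i,x_j)\ge|i-j|$ is exactly the first clause of Theorem~\ref{b-sequ}. This yields the desired partition.

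For the converse, suppose $\{T_1,\dots,T_k\}$ is a rooted tree partition with roots $r_1,\dots,r_k$, heights at most $k-i$, and $d(r_i,r_j)\ge|i-j|$. I would verify the two conditions of Theorem~\ref{b-sequ} for the sequence $(r_1,\dots,r_k)$. The pairwise-distance condition is given outright. For the covering equation, any $v$ lies in some $T_i$ and, because $T_i$ is a subgraph of $G$, we have $d_G(r_i,v)\le d_{T_i}(r_i,v)\le k-i$ as the height of $T_i$ is at most $k-i$; hence $v\in N_{k-i}[r_i]$. Since the trees partition $V(G)$, the union $\bigcup_i N_{k-i}[r_i]$ is all of $V(G)$, and Theorem~\ref{b-sequ} then certifies that $(r_1,\dots,r_k)$ burns $G$ in $k$ steps.

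The main obstacle is the forward construction: turning the abstract covering statement $\bigcup_i N_{k-i}[x_i]=V(G)$ into an honest forest with the correct heights. The delicate points are that a shortest path from a source to a vertex may wander through vertices claimed by other sources, and that ties in $r(v)$ must be broken consistently. The potential function $r$, together with its Lipschitz property, is exactly what lets me pick well-defined, cycle-free parent pointers and read off the height bound $r(v)-i\le k-i$ automatically. Once $r(x_i)=i$ is pinned down via the distance condition, the remainder is bookkeeping.
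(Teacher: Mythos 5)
Your proof is correct. Note, however, that the paper offers no proof of this statement to compare against: Theorem~\ref{tree-partition} is quoted verbatim from \cite{BoJR16} and used as a black box. Your argument is essentially the standard one from that source --- assign each vertex to a source that reaches it earliest along a shortest path and collect the resulting parent pointers into shortest-path trees --- with the potential function $r(v)=\min_i\bigl(i+d(x_i,v)\bigr)$ serving as a clean device for organizing the bookkeeping. All the steps check out: $r(x_i)=i$ follows from the pairwise distance condition of Theorem~\ref{b-sequ}; the Lipschitz property guarantees each parent link drops $r$ by exactly one, so the parent digraph is acyclic, every component contains exactly one parentless vertex (a source), and the depth of $v$ below $x_i$ is $r(v)-i$. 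The only point worth making explicit is that the height bound $r(v)-i\le k-i$ rests on $r(v)\le k$, which is precisely the covering equation $\bigcup_i N_{k-i}[x_i]=V(G)$ of Theorem~\ref{b-sequ}; you use this implicitly when you say $r$ records the burning round. The converse direction is routine and correct as written.
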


\begin{lemma}\cite{BoJR16}
	\label{spanning}
	For a graph $G$, we have $b(G)=\min\{b(T): T~is~a~spanning~tree~ of~G\}$.
\end{lemma}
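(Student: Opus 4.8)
The plan is to prove separately the two inequalities $b(G)\le m$ and $b(G)\ge m$, where I write $m=\min\{b(T):T\ \mbox{is a spanning tree of}\ G\}$. The entire argument rests on one elementary monotonicity principle: if $H$ is a spanning subgraph of $G$ (same vertex set, a subset of the edges), then $d_G(u,v)\le d_H(u,v)$ for all $u,v$, so that $N^{H}_{j}[v]\subseteq N^{G}_{j}[v]$ for every $j$ and every $v$. In words, deleting edges can only increase distances and shrink balls, while adding edges does the reverse.

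For the inequality $b(G)\le m$, let $T^*$ be a spanning tree attaining $m$ and let $(x_1,\dots,x_m)$ be an optimal burning sequence of $T^*$. First I would run the two burning processes in parallel, lighting $x_t$ at round $t$ in $G$ whenever $x_t$ is still unburned there, and any available unburned vertex otherwise. Since every edge of $T^*$ is an edge of $G$, a straightforward induction on $t$ shows that the set burned in $G$ after round $t$ contains the set burned in $T^*$ after round $t$; as $T^*$ is completely burned by round $m$, so is $G$, giving $b(G)\le m$. Equivalently, one may invoke Theorem \ref{b-sequ}: the covering $\bigcup_{i}N^{T^*}_{m-i}[x_i]=V(T^*)$ passes to $G$ because $N^{T^*}_{m-i}[x_i]\subseteq N^{G}_{m-i}[x_i]$, and the distance constraints only become easier in $G$.

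For the reverse inequality $b(G)\ge m$, set $k=b(G)$ and apply Theorem \ref{tree-partition}: there is a rooted tree partition $\{T_1,\dots,T_k\}$ of $G$ whose members are subtrees of $G$ with heights at most $k-1,\dots,0$, and whose roots $r_1,\dots,r_k$ satisfy $d_G(r_i,r_j)\ge|i-j|$. The idea is to glue these $k$ subtrees into a single spanning tree $T$ of $G$ that contains each $T_i$: contracting each $V(T_i)$ to a point yields a connected quotient (multi)graph because $G$ is connected, and a spanning tree of that quotient selects $k-1$ edges of $G$ joining distinct parts. Adding them to $\bigcup_{i}T_i$ produces a connected spanning subgraph with exactly $|V(G)|-1$ edges, hence a spanning tree $T$ with $T_i\subseteq T$ for all $i$. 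Now $\{T_1,\dots,T_k\}$ is also a rooted tree partition of $T$: the height of each $T_i$ is an intrinsic property of that rooted subtree and so is unchanged, while $d_T(r_i,r_j)\ge d_G(r_i,r_j)\ge|i-j|$ since $T$ is a spanning subgraph of $G$ and distances can only grow. Theorem \ref{tree-partition} applied to $T$ then gives $b(T)\le k$, so that $m\le b(T)\le b(G)$, and combining with the first inequality yields $b(G)=m$.

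The routine half is $b(G)\le m$; the crux is the second half, and within it the real content is the merging construction together with the observation that passing from $G$ to a spanning subtree preserves the heights of the $T_i$ and can only enlarge the pairwise root distances, so that both hypotheses of Theorem \ref{tree-partition} survive the restriction to $T$. The one point I would check carefully is that the glued object is genuinely a tree containing every $T_i$ as a rooted subtree, rather than merely a connected spanning subgraph; this is exactly where the edge count $|V(G)|-1$ and the connectivity of $G$ are used to rule out cycles across the parts.
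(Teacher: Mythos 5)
The paper does not prove this lemma; it is quoted from \cite{BoJR16}, and your argument is essentially the standard proof given there: monotonicity of burning under adding edges for $b(G)\le\min_T b(T)$, and, for the converse, gluing the rooted tree partition of Theorem \ref{tree-partition} into a single spanning tree via $k-1$ connecting edges and checking that heights are preserved while root distances only grow. Both halves are sound, including the edge count $\sum_i(|T_i|-1)+(k-1)=|V(G)|-1$ that certifies the glued object is a tree. One side remark is wrong, though it does not affect the proof: in your ``equivalently, invoke Theorem \ref{b-sequ}'' justification of $b(G)\le m$ you claim the distance constraints $d(x_i,x_j)\ge j-i$ ``only become easier in $G$''. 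They become \emph{harder}: since $T^*$ is a spanning subgraph of $G$ we have $d_G(x_i,x_j)\le d_{T^*}(x_i,x_j)$, so a pair satisfying the constraint in $T^*$ may violate it in $G$, and $(x_1,\dots,x_m)$ need not literally be a burning sequence of $G$. This is exactly why your primary argument --- the parallel-process induction, or equivalently the standard fact that the covering condition $\bigcup_i N_{m-i}[x_i]=V(G)$ alone already implies $b(G)\le m$ --- is the right one; keep that and drop the ``equivalently'' sentence.
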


A subgraph $H$ of a graph $G$ is called an {\em isometric subgraph} if for every
pair of vertices $u$ and $v$ in $H$, we have that $d_H(u, v) = d_G(u, v)$.

\begin{lemma}\cite{BoJR16,Rosh16}
\label{isometric}
For any isometric subtree $H$ of a graph $G$, we have $b(H)\leq
b(G)$.
\end{lemma}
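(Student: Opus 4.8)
The plan is to start from an optimal burning of $G$ and to project the burned regions onto $H$, producing a covering of $V(H)$ by balls whose radii are exactly $0,1,\ldots,b(G)-1$; such a covering can then be converted into a burning of $H$ in at most $b(G)$ rounds. Concretely, set $k=b(G)$ and fix an optimal burning sequence $(x_1,\ldots,x_k)$ of $G$. By the set equation in Theorem~\ref{b-sequ}, the sets $S_i:=N_{k-i}[x_i]\cap V(H)$ satisfy $\bigcup_{i=1}^k S_i=V(H)$; here $S_i$ is precisely the collection of vertices of $H$ reached by the $i$-th fire within the $k-i$ rounds available to it.

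The heart of the argument is to replace each $S_i$ by a single ball of $H$ of radius $k-i$. Because $H$ is isometric, $d_H(u,v)=d_G(u,v)$ for all $u,v\in V(H)$, so for any $u,v\in S_i$ the triangle inequality in $G$ gives $d_H(u,v)=d_G(u,v)\le d_G(u,x_i)+d_G(x_i,v)\le 2(k-i)$; that is, $S_i$ has diameter at most $2(k-i)$ inside $H$. I then invoke the tree property that in any tree a vertex set of diameter at most $2r$ lies in a single ball of radius $r$: taking a diametral pair $u,v$ of $S_i$ and letting $y_i$ be the midpoint of the $u$--$v$ path in $H$, every $w\in S_i$ satisfies $d_H(y_i,w)\le k-i$ (verified by projecting $w$ onto the $u$--$v$ path and using $d_H(u,w),d_H(v,w)\le 2(k-i)$, with a short case analysis on which side of $y_i$ the projection falls). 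Hence $S_i\subseteq N_{k-i}^{H}[y_i]$, and therefore $\bigcup_{i=1}^k N_{k-i}^{H}[y_i]=V(H)$.

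Finally, I would turn this covering into a burning of $H$, thereby avoiding any separate verification of the distance condition. Run the burning process on $H$ by attempting to light $y_i$ at round $i$: if $y_i$ is still unburned it becomes the $i$-th source and burns all of $N_{k-i}^{H}[y_i]$ by round $k$; if $y_i$ has already been reached by an earlier fire no later than round $i$, then at least $k-i$ further rounds of spreading remain, which again suffices to burn $N_{k-i}^{H}[y_i]$, and the otherwise wasted round may be spent lighting any unburned vertex. In either case every $N_{k-i}^{H}[y_i]$ is burned by round $k$, so $V(H)$ is entirely burned and $b(H)\le k=b(G)$.

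I expect the main obstacle to be the tree step of the second paragraph: the implication ``diameter $\le 2r$ forces containment in a radius-$r$ ball'' is exactly where the hypothesis that $H$ is a \emph{tree}, rather than an arbitrary isometric subgraph, is used, and it is the one point needing genuine care. The corresponding statement fails for general graphs (for instance $C_4$ has diameter $2$ yet no single vertex has all others within distance $1$), which explains why the lemma is stated for isometric subtrees.
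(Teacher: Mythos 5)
The paper does not prove this lemma itself --- it is quoted from \cite{BoJR16,Rosh16} --- so there is no in-paper proof to compare against. Your argument is correct and is essentially the standard one from those references: intersect the balls $N_{k-i}[x_i]$ with $V(H)$, use isometry to bound the diameter of each intersection by $2(k-i)$, re-center it as a ball of radius $k-i$ in the tree $H$, and then convert the resulting ball cover of $V(H)$ into a burning by the greedy ``light $y_i$ if still unburned'' repair, which correctly sidesteps the pairwise distance condition of Theorem~\ref{b-sequ}. One small point of care in your midpoint verification: the bound you actually need there is $d_H(w,v)\le d_H(u,v)$ (i.e., the diametrality of the pair $u,v$ within $S_i$), not merely $d_H(w,v)\le 2(k-i)$, since the latter alone does not force $d_H(y_i,w)\le k-i$ when $d_H(u,v)$ is small; your setup with a diametral pair already supplies this, so the gap is only in the stated justification, not in the construction.
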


The following assertions will be used in the proof of our main results.

\begin{prop}
\label{Sp4}
Let $w\in V(SP_s(x))$ ($s\ge 3$) with $d(x,w)=t$. If the height of $SP_s(x)$ with $w$ as the root is at most $i$, then $|SP_s(x)|\le  2i+1+(s-2)(i-t)$. Moreover,

(i) if $|SP_s(x)|=si+1$, then $x=w$ and $SP_s(x)-x=sP_i$;

(ii) if $|SP_s(x)|=si$, then $x=w$ and $SP_s(x)-x=(s-1)P_i+P_{i-1}$,  or $d(x,w)=1$, $s=3$ and $SP_s(x)-\{x,w\}=P_i+2P_{i-1}$;

(iii) if $|SP_s(x)|=si-1$, then either $x=w$ and $SP_s(x)-x\in\{(s-1)P_{i}+P_{i-2},(s-2)P_i+2P_{i-1}\}$, or $d(x,w)=2$, $s=3$ and $SP_s(x)-\{x,y,w\}=P_i+2P_{i-2}$ where $xyw$ is a path of length 2, or $d(x,w)=1$, $s=4$ and $SP_s(x)-\{x,w\}=P_i+3P_{i-1}$, or $d(x,w)=1$, $s=3$ and $SP_s(x)-\{x,w\}=P_i+P_{i-1}+P_{i-2}$.

\end{prop}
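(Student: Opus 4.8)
The plan is to reduce everything to the arm lengths of the star. Write $SP_s(x)$ as the union of $s$ arms (paths) sharing only the vertex $x$, and let $\ell_1\ge\ell_2\ge\cdots\ge\ell_s\ge 1$ be their lengths in edges, so that $|SP_s(x)|=1+\sum_{j=1}^s\ell_j$ and deleting $x$ produces the linear forest $P_{\ell_1}+\cdots+P_{\ell_s}$. If $t=0$ then $w=x$; if $t\ge 1$ then $w$ lies in the interior or at the tip of some arm, say the arm of length $\ell_{j_0}$ (so $\ell_{j_0}\ge t$), at distance $t$ from $x$.

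First I would translate the height hypothesis into inequalities on the $\ell_j$. Rooted at $w$, the farthest vertex of the arm carrying $w$ is its tip, at distance $\ell_{j_0}-t$, while the farthest vertex of any other arm is its tip, at distance $t+\ell_j$; hence the eccentricity of $w$ equals $\max\{\ell_{j_0}-t,\ \max_{j\ne j_0}(t+\ell_j)\}$. Requiring this to be at most $i$ gives $\ell_{j_0}\le i+t$ and $\ell_j\le i-t$ for every $j\ne j_0$. Summing, $|SP_s(x)|\le 1+(i+t)+(s-1)(i-t)=2i+1+(s-2)(i-t)=si+1-(s-2)t$, which is the claimed bound. (When $t=0$ this reads $si+1$ with every $\ell_j\le i$.)

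For parts (i)--(iii) I would compare the prescribed order with the maximum $M:=si+1-(s-2)t$. Since $s\ge 3$, the coefficient $s-2\ge 1$, so the gap $M-|SP_s(x)|\in\{0,1,2\}$ forces $(s-2)t$ to be small and pins $(s,t)$ down to a handful of cases. Concretely, $|SP_s(x)|=si+1$ forces $(s-2)t\le 0$, hence $t=0$ and $w=x$; $|SP_s(x)|=si$ forces $(s-2)t\le 1$, hence $t=0$ or $(t,s)=(1,3)$; and $|SP_s(x)|=si-1$ forces $(s-2)t\le 2$, hence $t=0$, $(t,s)\in\{(1,3),(1,4)\}$, or $(t,s)=(2,3)$. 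In each surviving case the target order is either $M$ itself (so every arm inequality is tight) or $M-1$ (so exactly one unit of slack is distributed among the arms), and the admissible arm-length multisets can be listed by inspection.

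Executing this, the $t=0$ subcases are the cleanest: all $\ell_j\le i$, and a sum equal to $si+1$, $si$, $si-1$ forces respectively all arms $=i$ (giving $sP_i$), all but one $=i$ with one $=i-1$ (giving $(s-1)P_i+P_{i-1}$), or the two ways of losing two units (giving $(s-1)P_i+P_{i-2}$ or $(s-2)P_i+2P_{i-1}$). The equality subcases $(t,s)=(1,3)$ in (ii) and $(t,s)\in\{(2,3),(1,4)\}$ in (iii) make every inequality tight, so $\ell_{j_0}=i+t$ and the remaining $s-1$ arms equal $i-t$; deleting the $t+1$ vertices on the path joining $x$ and $w$ then leaves $P_i$ on the $w$-arm and $P_{i-t}$ on each other arm, yielding exactly $P_i+2P_{i-1}$, $P_i+2P_{i-2}$, and $P_i+3P_{i-1}$, as stated. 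The one genuinely fiddly case is the near-maximal $(t,s)=(1,3)$ in (iii), where the single unit of slack may sit either on the $w$-arm or on another arm; here the main obstacle is a completeness and bookkeeping check, namely verifying that the only new configuration is $P_i+P_{i-1}+P_{i-2}$ and that the alternative profile $(i;\,i-1,i-1)$ is not a new graph but the already-recorded $P_i+2P_{i-1}$ viewed from a non-central root. Keeping track of which profiles give genuinely distinct $(SP_s(x),w)$ pairs rather than re-rootings of stars already listed is the only delicate point; the rest is the routine arm-length arithmetic sketched above.
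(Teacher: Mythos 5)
Your argument is essentially the paper's: you bound the contribution of each branch as seen from the root $w$ (the $w$-arm contributes at most $i+t$ vertices besides $x$, every other arm at most $i-t$), sum to get $|SP_s(x)|\le 2i+1+(s-2)(i-t)=si+1-(s-2)t$, and then use the gap between this maximum and the prescribed order to force $(s-2)t\le 0,1,2$ in cases (i)--(iii) and enumerate the arm-length profiles. All of that matches the paper's proof step for step, and your enumeration in (i), (ii), the $t=0$ subcases of (iii), and the tight subcases $(t,s)\in\{(1,4),(2,3)\}$ is correct.

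The one place you diverge is the subcase you yourself flag as delicate: $(t,s)=(1,3)$ in (iii), where there is one unit of slack and the profile $(\ell_{j_0};\ell_2,\ell_3)=(i;\,i-1,i-1)$ survives alongside $(i+1;\,i-1,i-2)$. Your dismissal of it --- that it is ``the already-recorded $P_i+2P_{i-1}$ viewed from a non-central root'' --- does not actually close the case for the statement as written. That configuration satisfies every hypothesis with $d(x,w)=1$ and $s=3$, yet gives $SP_3(x)-\{x,w\}=3P_{i-1}$, which is not among the listed outcomes; the branch of the conclusion that records the graph with arms $(i,i-1,i-1)$ requires $x=w$, which fails here. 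So your completeness check, if carried out honestly, would show that this disjunct cannot be verified rather than that it holds. To be fair, the paper's own proof of (iii) simply asserts $SP_s(x)-\{x,w\}=P_i+P_{i-1}+P_{i-2}$ for $t=1$, $s=3$ without accounting for the slack, so it silently makes the same omission (the offending subcase happens not to be invoked anywhere in the paper, which only uses (iii) with $s=4$). You should either add the outcome $SP_3(x)-\{x,w\}=P_i+2P_{i-1}$ (equivalently $3P_{i-1}$ after deleting $x$ and $w$) to the statement of (iii), or restrict the claim to what is actually needed downstream; as it stands your proof sketch does not establish the proposition verbatim.
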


\begin{proof} Let $x_j$ be a vertex of degree 1 of $SP_s(x)$, where $1\le j\le s$.
Note that $d(w,z)\le i$ for any $z\in V(SP_s(x))$. Hence $$ |SP_s(x)|\le i+t+1+(s-1)(i-t)=2i+1+(s-2)(i-t).\eqno(*)$$

(i) If $|SP_s(x)|=si+1$, then by ($*$), $t=0$, and then $d(w,x_j)=i$ for $1\le j\le s$, that is, $x=w$ and $SP_s(x)-x= sP_i$.

(ii) If $|SP_s(x)|=si$, then by ($*$), $(s-2)t\le 1$ which implies $t\le 1$ as $s\ge 3$, moreover, if $t=1$, then $s=3$, that is $d(x,w)=1$, and then $SP_s(x)-\{x,w\}=P_i+2P_{i-1}$.
If $t=0$, that is $x=w$, then there exists some $j_0$ with $1\le j_0\le s$ such that $d(w,x_{j_0})= i-1$ and $d(w,x_j)= i$ for $1\le j\le s$ and $j\not=j_0$, thus $SP_s(x)-x=(s-1)P_{i}+P_{i-1}$.

(iii) If $|SP_s(x)|=si-1$, then by ($*$), $(s-2)t\le 2$ which implies $t\le 2$ as $s\ge 3$. If $t=2$, then $s=3$, and then $d(x,w)=2$ and $SP_s(x)-\{x,y,w\}=P_i+2P_{i-2}$, where $xyw$ is a path of length 2. If $t=1$, then $3\le s\le 4$, that is $d(x,w)=1$, and then $SP_s(x)-\{x,w\}=P_i+3P_{i-1}$ for $s=4$, and $SP_s(x)-\{x,w\}=P_i+P_{i-1}+P_{i-2}$ for $s=3$.
If $t=0$, then $x=w$ and $SP_s(x)-x\in\{(s-1)P_{i}+P_{i-2},(s-2)P_i+2P_{i-1}\}$.
\end{proof}

For any $G\in \mathcal U_{n,g}^{(t)}$ with $\Delta(G)=d(v_g)=t+2$ and $d(v)\le 2$ for any $v\neq v_g$, we always suppose that $(x_1, x_2,\ldots, x_k)$ is an optimal burning sequence for $G$. By Theorem \ref {tree-partition}, there exists a rooted tree partition $\{T_1(x_1), T_2(x_2), \ldots, T_k(x_k)\}$ of $G$ with heights at most $(k-1), (k -2), \ldots, 0$, respectively. Then $T_i(x_i)= P_{l_i}$ with $l_i\le 2(k-i)+1$, or $T_i(x_i)= SP_s(v_g)$ with $3\le s\le t+2$. Assume $v_g\in V(T_{k-i_0}(x_{k-i_0}))$ for some $ i_0\in [k]$. Then by Proposition \ref{Sp4}, $|T_{k-i_0}(x_{k-i_0})|\le 2i_0+1+(s-2)(i_0-p_{i_0})$ with $s\le t+2$ and $d(v_g,x_{k-i_0})=p_{i_0}\ge 0$, and then $|T_{k-i}(x_{k-i})|\le 2i+1$ for $i\in [k]\backslash\{i_0\}$.
Hence,
\begin{eqnarray*}~~~~~n&=&\sum_{i=1}^k|T_i(x_i)|
\le \sum_{i=0}^{k-1}(2i+1)-(2i_0+1)+2i_0+1+(s-2)(i_0-p_{i_0})\\
&=& k^2+(s-2)(i_0-p_{i_0})\le k^2+t(k-1),~~~~~~~~~~~~~~~~~~~~~~~~~~~~~~~~~~~~~~~~~~~(1)\end{eqnarray*}
where the last inequality follows from $p_{i_0}\ge 0$, $s\le t+2$ and $i_0\le k-1$.

Note that $C_g$ is the unique cycle of $G$, and hence $H:=G-e$ is a generalized star for any $e\in E(C_g)$. By Lemma \ref{spanning} and Theorem \ref{spider}, $b(G)\leq b(H)\leq \left\lceil \sqrt{n}\right\rceil$. So, by (1),  we have the following result.

\begin{theorem}\label{aq} For any $G\in \mathcal U_{n,g}^{(t)}$, we have
$$\left\lceil\sqrt{n+\frac{t^2+4t}{4}}-\frac{t}{2}\right\rceil\le b(G)\le \lceil\sqrt{n}\rceil.\eqno(2)$$
\end{theorem}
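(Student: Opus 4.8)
The plan is to obtain the two inequalities in (2) separately, since the upper bound has essentially already been supplied in the paragraph preceding the statement. For the upper bound $b(G)\le\lceil\sqrt{n}\rceil$, I would simply invoke the observation that deleting any edge $e\in E(C_g)$ turns $G$ into a generalized star $H=G-e$ (because $C_g$ is the unique cycle, so $H$ is acyclic and still has exactly one vertex of degree greater than $2$). Then Lemma~\ref{spanning} gives $b(G)\le b(H)$ — noting $H$ is a spanning tree of $G$ — and Theorem~\ref{spider} gives $b(H)\le\lceil\sqrt{n}\rceil$. This chain is exactly what the text states, so the upper bound requires no new work.

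The substance is the lower bound, and here the key ingredient is inequality (1), namely $n\le k^2+t(k-1)$ where $k=b(G)$. I would treat (1) as established (it is derived in the displayed computation just above the theorem, via the rooted tree partition of Theorem~\ref{tree-partition} together with the size bound from Proposition~\ref{Sp4}). The remaining task is purely algebraic: rearrange $n\le k^2+t(k-1)$ into a lower bound on $k$. First I would rewrite it as $n+t\le k^2+tk$, then complete the square on the right to get $n+t+\tfrac{t^2}{4}\le\bigl(k+\tfrac{t}{2}\bigr)^2$, i.e. $n+\tfrac{t^2+4t}{4}\le\bigl(k+\tfrac{t}{2}\bigr)^2$. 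Taking positive square roots yields $\sqrt{n+\tfrac{t^2+4t}{4}}\le k+\tfrac{t}{2}$, hence $k\ge\sqrt{n+\tfrac{t^2+4t}{4}}-\tfrac{t}{2}$.

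Finally, since $k=b(G)$ is an integer, I would conclude $k\ge\bigl\lceil\sqrt{n+\tfrac{t^2+4t}{4}}-\tfrac{t}{2}\bigr\rceil$, which is the left-hand side of (2). Combining this with the upper bound gives the two-sided estimate exactly as stated.

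I do not anticipate a genuine obstacle, because both of the heavy lemmas (the generalized-star bound and the tree-partition counting that produces (1)) are already available in the excerpt; the theorem is essentially a repackaging of (1) into a clean closed form. The only point demanding a little care is the direction of the completing-the-square step and confirming that $k+\tfrac t2>0$ so that taking square roots preserves the inequality — this is immediate since $k\ge1$ and $t\ge1$. The ceiling at the end is legitimate precisely because $b(G)\in\mathbb{Z}$, so the floor/ceiling manipulation is routine rather than delicate.
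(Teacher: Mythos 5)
Your proposal is correct and follows exactly the route the paper intends: the upper bound via the spanning generalized star $H=G-e$ together with Lemma~\ref{spanning} and Theorem~\ref{spider}, and the lower bound by completing the square in inequality (1), $n\le k^2+t(k-1)$, and taking the ceiling since $b(G)$ is an integer. The paper gives no separate proof beyond the preceding paragraph, so your write-up is simply the explicit version of the same argument.
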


If $t\le 2$, then by (1), $b(G)\ge  \left\lceil\sqrt{n+3}\right\rceil-1\ge \left\lceil\sqrt{n}\right\rceil-1.$ Therefore, we have the following result.

\begin{corollary}\label{unicyclic3}
Let $G\in \mathcal U_{n,g}^{(t)}$ with $n=q^2+r$ ($1\le r\le 2q+1$). If $1\le t\le 2$, then $q\leq b(G)\leq q+1$.
\end{corollary}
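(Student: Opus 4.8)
The plan is to derive Corollary~\ref{unicyclic3} directly from Theorem~\ref{aq} by specializing the bounds to the case $t\le 2$ and writing $n=q^2+r$ with $1\le r\le 2q+1$. The upper bound is immediate: Theorem~\ref{aq} gives $b(G)\le\lceil\sqrt{n}\rceil$, and since $n=q^2+r$ with $r\le 2q+1 < (q+1)^2-q^2=2q+1+0$... more precisely $q^2 < n \le q^2+2q+1=(q+1)^2$, so $\lceil\sqrt{n}\rceil\le q+1$. Thus $b(G)\le q+1$, which is half of what we want. The only real work is the lower bound $b(G)\ge q$.

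For the lower bound I would start from inequality $(1)$ in the excerpt, namely $n\le k^2+t(k-1)$ where $k=b(G)$. Setting $t\le 2$ and using $i_0\le k-1$ gives $n\le k^2+2(k-1)=k^2+2k-2$, so that $n+3\le k^2+2k+1=(k+1)^2$, hence $k+1\ge\sqrt{n+3}$, i.e. $b(G)\ge\sqrt{n+3}-1$. Since $b(G)$ is an integer this yields $b(G)\ge\lceil\sqrt{n+3}\rceil-1$, exactly the bound claimed just before the corollary. (This is the intermediate inequality the text asserts by ``by $(1)$''; one could equally read it off the general lower bound in $(2)$ by plugging $t=2$.) It then remains to check the purely arithmetic step $\lceil\sqrt{n+3}\rceil-1\ge q$, equivalently $\lceil\sqrt{n+3}\rceil\ge q+1$, equivalently $\sqrt{n+3}>q$, i.e. $n+3>q^2$. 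Since $n=q^2+r$ with $r\ge 1$, we have $n+3=q^2+r+3>q^2$, so indeed $\sqrt{n+3}>q$ and the ceiling is at least $q+1$.

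Combining the two halves gives $q\le b(G)\le q+1$, which is the assertion. The argument is essentially bookkeeping: the substantive content was already done in establishing $(1)$ and invoking Theorem~\ref{spider} for the upper bound. The one place to be careful is the ceiling arithmetic. For the lower bound I want $\lceil\sqrt{n+3}\rceil-1\ge q$; this needs $\sqrt{n+3}>q$ rather than $\ge q$, which holds because $r\ge 1$ forces the strict inequality $n+3\ge q^2+4>q^2$. For the upper bound I want $\lceil\sqrt{n}\rceil\le q+1$; this needs $n\le(q+1)^2$, which holds because $r\le 2q+1$ gives $n=q^2+r\le q^2+2q+1=(q+1)^2$. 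Both inequalities on $r$ are used, and each is exactly strong enough.

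I do not expect any genuine obstacle here, since the corollary is a direct consequence of Theorem~\ref{aq} (or of the displayed inequality $(1)$) together with elementary estimates on $\lceil\sqrt{\cdot}\rceil$. The only mild subtlety worth flagging is that one must use the strict-versus-weak distinction in the two ceiling comparisons, and that the hypothesis $1\le r\le 2q+1$ is precisely the range in which both a ``$q$'' and a ``$q+1$'' answer are possible, so that neither bound can be tightened without further case analysis (which is exactly what the later sections of the paper carry out). In particular, no new idea beyond $(2)$ is needed; the proof is a two-line specialization.
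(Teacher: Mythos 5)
Your proof is correct and follows essentially the same route as the paper: the upper bound comes from Theorem~\ref{aq} together with $n\le(q+1)^2$, and the lower bound comes from specializing inequality $(1)$ to $t\le 2$ to get $b(G)\ge\lceil\sqrt{n+3}\rceil-1\ge q$, which is exactly the computation the paper sketches in the sentence preceding the corollary. The ceiling arithmetic you flag (strict inequality $n+3>q^2$ from $r\ge1$, weak inequality $n\le(q+1)^2$ from $r\le 2q+1$) is handled correctly.
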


\section{Burning number of unicyclic graphs in $\mathcal U_{n,g}^{(1)}$}

In this section, $\mathcal U_{n,g}^{(1)}=\{U_{g}^{n-g}\}$. Let $V(U_{g}^{n-g})=\{v_1,\cdots, v_g, w_1,\cdots w_{n-g}\}$ with $w_0=v_g$ and $d(v_g,w_i)=i$ for $0\le i\le n-g$. By Theorem \ref{b=2}, if $ 4\le n\le 5 $, $ b(G)=2 $ for any $ G\in \mathcal U_{n,g}^{(1)} $. So we can assume $n\ge 6$. Then $b(G)\ge 3$ for $ G\in \mathcal U_{n,g}^{(1)} $.

\subsection{Unicyclic graphs in $\mathcal U_{n,g}^{(1)}$ with burning number $q$}

In this subsection, we will give some sufficient conditions for the unicyclic graphs in $\mathcal U_{n,g}^{(1)}$ to have burning number $q\ge 3$. Recall that $d(v_g) =3$ and $n=q^2+r$ with $1\leq r\leq 2q+1$. Denote $\mathcal A=\{(2q+1,q^{2}-q-2),(q^{2}-2,q+1)\}$.

\begin{lemma}\label{q1}
Suppose that $1\le r\le q-1$. If  $2r+1\le g\le q^{2}$ and $(g,n-g)\notin \mathcal A$, then $b(U_{g}^{n-g})\le q$.
\end{lemma}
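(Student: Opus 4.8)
The goal is to exhibit a burning sequence of length $q$ for $U_g^{n-g}$, equivalently (by Theorem \ref{tree-partition}) to produce a rooted tree partition $\{T_1,\dots,T_q\}$ with heights at most $q-1,q-2,\dots,0$ and pairwise root-distances at least $|i-j|$. Since $U_g^{n-g}$ consists of a cycle $C_g$ together with a single path $P_{n-g}$ attached at $v_g$, the natural plan is to pick a spanning subtree and then cover it greedily by the ``balls'' $N_{q-1}[x_1],\dots,N_0[x_q]$ guaranteed by Theorem \ref{b-sequ}. The total vertices coverable by such a sequence is at most $\sum_{i=0}^{q-1}(2i+1)=q^2$, and here $n=q^2+r$ with $r\le q-1$, so there is slack of exactly $q^2-n = -r$; in fact we must cover $q^2+r$ vertices with budget $q^2$, which is impossible for a plain path but becomes possible because $v_g$ has degree $3$, allowing one ball centered at $v_g$ (or near it) to be a generalized star that covers up to $3i+1$ rather than $2i+1$ vertices. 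This is the mechanism behind inequality (1), and the constraint $r\le q-1$ is exactly what inequality (1) permits for $t=1$.

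First I would set up coordinates: label the cycle $v_1\cdots v_g v_1$ and the arm $w_0=v_g,w_1,\dots,w_{n-g}$. The plan is to reserve one ``star-shaped'' tree rooted at (or adjacent to) $v_g$ to absorb the extra $r$ vertices coming from the cycle's branching, and to cover the remainder of the cycle and the arm by ordinary path-balls of sizes $2i+1$. Concretely, I would place the largest-radius center so that its ball is a generalized star $SP_3(v_g)$ using all three directions out of $v_g$ (two along the cycle, one along the arm), invoking Proposition \ref{Sp4} to get the sharp count $|SP_3(v_g)|$ up to $3i+1$. The hypotheses $2r+1\le g\le q^2$ should be used to guarantee that the cycle is long enough to ``split'' the star into two cycle-arms of comparable length (so the two cycle-directions can each contribute near $i$ vertices) yet short enough that the leftover arm length $n-g$ is manageable; the lower bound $2r+1\le g$ plausibly ensures the two cycle segments from $v_g$ can jointly supply the $2i$-type coverage needed, while $g\le q^2$ keeps $n-g\ge 0$ sensible.

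The decisive step is the packing/counting argument: I would verify that after allotting the star-ball its $3i_0+1$ vertices, the residual graph (a disjoint union of at most two paths and the truncated arm) can be exactly tiled by balls $N_j[\,\cdot\,]$ for the remaining indices, with centers spaced so that all pairwise root-distance constraints $d(x_i,x_j)\ge|i-j|$ hold. This is where the excluded set $\mathcal A=\{(2q+1,q^2-q-2),(q^2-2,q+1)\}$ enters: these two configurations are precisely the boundary cases where the residual paths fall into the ``bad'' lengths for which Theorem \ref{f2}/Theorem \ref{f3} (burning of linear forests with $\le 3$ components) forces an extra burning step — i.e., they hit a $J(t)$-type or $J^i$-type exception. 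So I expect the main obstacle to be the detailed case analysis showing that for every $(g,n-g)$ in the stated range \emph{except} the two pairs in $\mathcal A$, the residual linear forest (after removing the star and one cycle edge to form a spanning generalized star via Lemma \ref{spanning} and Lemma \ref{isometric}) is burnable within the remaining budget.

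To make this rigorous I would translate the residual requirement into a statement about burning a linear forest with at most three paths and apply Theorem \ref{f2} and Theorem \ref{f3} directly: choose the cut edge $e\in E(C_g)$ and the root placement so that $G-e$ is a generalized star $SP_3(v_g)$, then split off the star-ball and show the complement decomposes into $\le 3$ paths whose orders sum correctly and avoid the exceptional families $J(t)$, $J^1,\dots,J^5$ unless $(g,n-g)\in\mathcal A$. The bookkeeping — matching the parameter $r$ against the precise path-length thresholds in those exception lists, and checking the distance constraints between consecutive centers along the arm and around the cycle — is the routine but lengthy heart of the proof; the conceptual content is entirely in the slack computation $n=q^2+r$, $r\le q-1$, together with the degree-$3$ bonus from $v_g$ quantified by Proposition \ref{Sp4}.
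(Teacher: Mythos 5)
Your core mechanism is the right one and matches the paper's treatment of the main case: center the radius-$(q-1)$ ball at $v_g$ so that it is a $3$-armed star absorbing $3q-2$ vertices, observe that the residue is the two-component linear forest $P_{g-2q+1}+P_{n-g-q+1}$ of order $q^2+r-3q+2\le (q-1)^2$ (using $r\le q-1$), and apply Theorem~\ref{f2}; the excluded set $\mathcal A$ is indeed exactly the pairs for which one residual path has order $2$ and the total is $(q-1)^2-2+2$, i.e.\ the $J(t)$ exception. (Since $t=1$ the residue never has three components, so Theorem~\ref{f3} and the sets $J^1,\dots,J^5$ play no role here.)

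However, there is a genuine gap in the regime $2r+1\le g\le 2q-1$. There the ball $N_{q-1}[v_g]$ is \emph{not} a star with three arms of length $q-1$: the two cycle directions collide, the ball covers only $g+(q-1)$ vertices, and the residual arm has order $n-g-q+1\le q^2-r-q$, which exceeds $(q-1)^2$ whenever $r<q-1$. So fixing the center at (or adjacent to) $v_g$ does not close this case, and your proposal never considers moving it. The paper's fix is to slide the first center out onto the arm, to $w_{q-g+g'}$ with $g'=\lfloor\frac{g-1}{2}\rfloor$: that ball exactly engulfs the whole cycle and reaches as far as possible down the arm, covering $2q-1+g'$ vertices and leaving a \emph{single} path of order $n-g'-2q+1\le(q-1)^2$, where the inequality uses $g'\ge r$. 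This is also the true role of the hypothesis $g\ge 2r+1$ (it forces $g'\ge r$), not, as you guess, to let the two cycle segments each "contribute near $i$ vertices"; likewise $g\le q^2$ is used to bound the residual arc $P_{g-2q+1}$ when the arm is short ($n-g\le q-1$), a subcase you also do not separate out. With the shifted center for small $g$ and the short-arm subcase added, your argument becomes the paper's proof.
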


\begin{proof} Let $G_w:=U_{g}^{n-g}-N_{q-1}[w]$ for $w\in V(U_{g}^{n-g})$. If $b(G_w)\le q-1$, then $b(U_{g}^{n-g})\le q$ by Theorem \ref{b-sequ}. Hence we can assume $b(G_w)\ge q$ for any $w\in V(U_{g}^{n-g})$.

Let $g':=\lfloor\frac{g-1}{2}\rfloor$. Then $r\le g'$ as $g\ge 2r+1$. If $g\le 2q-1$, then we can assume $n-g'\ge 2q$. Otherwise, $V(U_{g}^{n-g})\subseteq N_{q-1}[w_{q-g+g'}]$, then $b(U_{g}^{n-g})\le q$ by Theorem \ref{b-sequ}. So $G_{w_{q-g+g'}}= P_{n-g'-2q+1}$ with $n-g'-2q+1= q^2+r-g'-2q+1\le (q-1)^2$ as $r\le g'$, and thus, by Theorem \ref{Pn}, $b(G_{w_{q-g+g'}})\le q-1$, a contradiction. Therefore, $g\ge 2q$.

If $n-g\le q-1$, then $G_{v_g}= P_{g-2q+1}$ and $|G_{v_g}|\le (q-1)^2$ as $g\le q^2$, and then by Theorem \ref{Pn}, $b(G_{v_g})\le q-1$, a contradiction. So $n-g\ge q$, and then $G_{v_g}= P_{g-2q+1}+ P_{n-g-q+1}$ with $|G_{v_g}|= q^2+r-3q+2\le(q-1)^2$ as $r\le q-1$. Note that $(g,n-g)\notin \mathcal A$, and hence $2\notin \{n-g-q+1,g-2q+1\}$. By Theorem~\ref{f2}, $b(G_{v_g})=q-1$, a contradiction.
\end{proof}

By Corollary \ref{unicyclic3} and Lemma \ref{q1}, we have the following result.

\begin{theorem}
\label{b=q,}
Suppose that $U_{g}^{n-g}$ is a unicyclic graph  of order $n=q^2+r$ with $1\le r\le q-1$. If $2r+1\le g\le q^{2}$ and $(g,n-g)\notin \mathcal A$,  then $b(U_{g}^{n-g})= \left\lceil\sqrt{n}\right\rceil-1=q$.
\end{theorem}

\subsection{Unicyclic graphs in $\mathcal U_{n,g}^{(1)}$ with burning number $q+1$}

 In this subsection, we will characterize the graphs in $\mathcal U_{n,g}^{(1)}$ with burning number $q+1$. Recall that $\mathcal U_{n,g}^{(1)}=\{U_{g}^{n-g}\}$ and $(x_1, x_2,\ldots, x_k)$ is an optimal burning sequence for $U_{g}^{n-g}$. By Corollary \ref{unicyclic3}, we only need to give some sufficient conditions for $U_{g}^{n-g}$ to satisfy  $k\ge q+1$.

\begin{lemma}\label{q+1,}
If $ r\ge q$, then $k\ge q+1$.
\end{lemma}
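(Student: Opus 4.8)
The plan is to argue by contradiction: suppose $k\le q$, and derive a contradiction from the counting inequality (1) together with the case analysis it supports. Recall that for $G=U_g^{n-g}\in\mathcal U_{n,g}^{(1)}$ we have $t=1$, so $\Delta(G)=d(v_g)=3$, and inequality (1) reads $n\le k^2+(s-2)(i_0-p_{i_0})$ with $s\le 3$. If $k\le q$, then since $n=q^2+r$ with $r\ge q$, we would need $(s-2)(i_0-p_{i_0})\ge n-q^2=r\ge q$. Because $s\le 3$ forces $s-2\le 1$, this means $i_0-p_{i_0}\ge q$, where $i_0\le k-1\le q-1$ and $p_{i_0}=d(v_g,x_{k-i_0})\ge 0$. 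The hard part will be showing that $i_0-p_{i_0}$ cannot actually reach $q$: we have $i_0\le q-1$, so $i_0-p_{i_0}\le q-1<q$, which already yields $n\le k^2+(k-1)\le q^2+q-1<q^2+q\le q^2+r=n$, a contradiction. So the whole thesis reduces to making the bound $i_0\le k-1$ and the sharpness of (1) fully rigorous when $s=3$.

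First I would set up the rooted tree partition $\{T_1(x_1),\ldots,T_k(x_k)\}$ guaranteed by Theorem \ref{tree-partition}, with the height of $T_i(x_i)$ at most $k-i$. Exactly one tree, say $T_{k-i_0}(x_{k-i_0})$, contains the $3$-vertex $v_g$ (the cycle $C_g$ together with the single arm forces $v_g$ into precisely one block of the partition), and I would invoke Proposition \ref{Sp4} on that tree with root $x_{k-i_0}$: since $v_g$ is the unique vertex of degree $3$, $T_{k-i_0}(x_{k-i_0})$ is an $SP_s(v_g)$ with $s\le 3$, and its size is bounded by $2i_0+1+(s-2)(i_0-p_{i_0})$. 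Every other tree is a path $T_{k-i}(x_{k-i})=P_{l_i}$ with $l_i\le 2i+1$ for $i\in[k]\setminus\{i_0\}$. Summing these bounds reproduces precisely inequality (1).

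Next I would extract the only two facts I actually need from that derivation: that $s-2\le t=1$ and that $i_0\le k-1$. The latter holds because $i_0\in[k]=\{0,1,\ldots,k-1\}$ by the indexing convention established just before (1). Substituting $k\le q$, $s-2\le 1$, and $i_0-p_{i_0}\le i_0\le k-1\le q-1$ into (1) gives
\[
n\le k^2+(s-2)(i_0-p_{i_0})\le q^2+(q-1)=q^2+q-1.
\]
On the other hand the hypothesis $r\ge q$ gives $n=q^2+r\ge q^2+q$. These two inequalities are incompatible, so the assumption $k\le q$ is false, and hence $k\ge q+1$, as claimed.

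The main obstacle, such as it is, is purely bookkeeping: one must be careful that the single tree containing $v_g$ is correctly identified and that the ``$-(2i_0+1)$'' correction term in the summation of (1) is applied to exactly that tree, so that no arm or cycle vertex is double counted. Since Proposition \ref{Sp4} and inequality (1) are already established in the excerpt, I expect the proof to be short once the contradiction-via-counting framework is in place; indeed this lemma is essentially the ``$r\ge q$'' half of the bound in Corollary \ref{unicyclic3}, now stated for the burning-sequence length $k$ directly.
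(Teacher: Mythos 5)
Your proof is correct and is essentially the paper's own argument: the paper simply invokes the packaged lower bound $(2)$, i.e.\ $k\ge\lceil\sqrt{n+5/4}-1/2\rceil\ge q+1$ when $r\ge q$, and that bound is nothing but inequality $(1)$ with $t=1$ rearranged. You unwind $(2)$ back to $(1)$ and present the same counting as a contradiction, so the two arguments coincide.
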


\begin{proof}
Since $ r\ge q$, we have $$n+\frac{5}{4}=q^2+r+\frac{5}{4}\geq q^2+q+\frac{5}{4}=(q+\frac{1}{2})^2+1.$$
Then $\sqrt{n+\frac{5}{4}}>q +\frac{1}{2}$, i.e.,
$\left\lceil\sqrt{n+\frac{5}{4}}-\frac{1}{2}\right\rceil\ge q+1$, and so, by (2), $k\ge q+1$.
\end{proof}

\begin{lemma}\label{q+1,1}
If $g\le 2r$, then $k\ge q+1$.
\end{lemma}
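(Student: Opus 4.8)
The goal is to show that if $g \le 2r$ (with $n = q^2 + r$, $1 \le r \le 2q+1$), then any optimal burning sequence has length $k \ge q+1$. By Corollary \ref{unicyclic3} we already know $k \ge q$, so the entire burden is to rule out $k = q$. The plan is to argue by contradiction: assume $k = q$ and derive a violation of the vertex-count inequality that underlies Theorem \ref{aq}.

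The starting point is the counting argument from inequality (1) in the Preliminaries, specialized to $t = 1$. With $t=1$ we have $s = t+2 = 3$, so the tree $T_{k-i_0}(x_{k-i_0})$ containing the cut vertex $v_g$ is a generalized star $SP_3(v_g)$, while every other tree is a path $P_{l_i}$ with $l_i \le 2i+1$. Proposition \ref{Sp4} then caps the size of the star-tree: if its root $x_{k-i_0}$ is at distance $p_{i_0}$ from $v_g$ and its height is at most $i_0$, then $|T_{k-i_0}(x_{k-i_0})| \le 2i_0 + 1 + (i_0 - p_{i_0})$. Summing as in (1) gives $n \le k^2 + (i_0 - p_{i_0})$. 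Setting $k = q$, this forces $q^2 + r \le q^2 + (i_0 - p_{i_0})$, i.e. $i_0 - p_{i_0} \ge r$. Since $i_0 \le k - 1 = q - 1$ and $p_{i_0} \ge 0$, this is only possible when $i_0$ is large and $p_{i_0}$ is small; the inequality $i_0 \ge r + p_{i_0} \ge r$ already constrains the star-tree to be rooted near $v_g$ with substantial height.

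The hypothesis $g \le 2r$ should then be exploited to show the cycle cannot be accommodated. The key observation is that the cycle $C_g$ lives inside the single tree $T_{k-i_0}(v_g)$ of the rooted tree partition — a rooted tree is acyclic, so the partition must ``cut'' the cycle, and all of $C_g$ except one broken edge must fit inside the generalized star $SP_3(v_g)$ hanging structure. With the height bound $i_0 \le q-1$ and the distances around the cycle from $v_g$ (the farthest cycle vertex from $v_g$ is at distance roughly $\lfloor g/2 \rfloor$), the condition $g \le 2r$ together with $i_0 - p_{i_0} \ge r$ should produce an arithmetic contradiction: the portion of the cycle forced into the star, combined with the two arms, overshoots the size bound $2i_0 + 1 + (i_0 - p_{i_0})$. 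Concretely, I would track how many of the $g$ cycle-vertices must lie at distance $> i_0$ from the chosen root, or re-derive the size bound more carefully when $v_g$'s neighborhood includes cycle edges, and show $g \le 2r$ makes the required $i_0 - p_{i_0} \ge r$ incompatible with the height budget.

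The main obstacle I anticipate is that Proposition \ref{Sp4}'s bound is stated for a generalized star, whereas the tree $T_{k-i_0}(v_g)$ is a rooted subtree of the unicyclic graph after one cycle-edge is deleted; I must verify that the relevant spanning-tree/isometric reductions (Lemma \ref{spanning}, Lemma \ref{isometric}) let me treat the cycle's contribution correctly, and in particular account for the fact that breaking the cycle at different edges changes which vertices are at which distance from $v_g$. The cleanest route is probably to combine the global inequality $i_0 - p_{i_0} \ge r$ (forced by $k = q$) with a direct geometric count showing that when $g \le 2r$, fitting the cycle plus the long arm into a height-$i_0$ rooted tree with root at distance $p_{i_0}$ from $v_g$ is impossible, thereby contradicting $k = q$ and yielding $k \ge q+1$.
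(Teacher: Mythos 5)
Your proposal does not contain a proof; it contains a plan whose decisive step is left as a conjecture. After correctly extracting $i_0-p_{i_0}\ge r$ from the counting inequality (1) under the assumption $k=q$, everything that follows is conditional (``should then be exploited'', ``should produce an arithmetic contradiction'', ``I would track how many of the $g$ cycle-vertices must lie at distance $>i_0$''). The whole content of the lemma is precisely how the hypothesis $g\le 2r$ forces that contradiction, and that derivation is never carried out. Worse, the one concrete structural claim you do make --- that the rooted tree partition must cut the cycle so that ``all of $C_g$ except one broken edge must fit inside'' the single tree $T_{k-i_0}$ containing $v_g$ --- is false: the partition splits $V(G)$ among $k$ trees, and the cycle vertices may be distributed over many of the path-trees $T_{k-i}$, $i\ne i_0$; only $v_g$ itself is forced into $T_{k-i_0}$. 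So the geometric count you propose to do would start from an incorrect premise. A correct version of your route is probably salvageable (one must bound the cycle's contribution to $T_{k-i_0}$ by the lengths of the two cycle-arcs and then check tightness of all the other path-trees), but it would require genuine case analysis that is absent here.

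The paper's own proof is far simpler and entirely different: since deleting the $\bigl\lfloor\frac{g-1}{2}\bigr\rfloor$ cycle vertices on the far side of $v_g$ leaves an isometric path $H=P_{n-\lfloor (g-1)/2\rfloor}$ in $U_g^{n-g}$, and $g\le 2r$ gives $\bigl\lfloor\frac{g-1}{2}\bigr\rfloor\le r-1$, one gets $|H|\ge q^2+1$, whence $b(H)\ge q+1$ by Theorem \ref{Pn} and $k\ge b(H)\ge q+1$ by Lemma \ref{isometric}. You cite Lemma \ref{isometric} only as a technical worry inside your counting scheme; used directly on a long isometric path it finishes the lemma in two lines, with no rooted-tree-partition analysis at all.
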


\begin{proof}
Note that $H:=P_{n-\lfloor\frac{g-1}{2}\rfloor}$ is an isometric subtree of $U_{g}^{n-g}$. Since $g\le 2r$, we have $\lfloor\frac{g-1}{2}\rfloor\le r-1$. Then $|H|=q^2+r-\lfloor\frac{g-1}{2}\rfloor\ge q^2+1$, and hence, by Theorem \ref{Pn}, $b(H)\ge q+1$. By Lemma \ref{isometric}, we have $k\ge b(H)\ge q+1$.
\end{proof}

\begin{lemma}\label{q+1,2}
If $g\geq q^{2}+1$, then $k\ge q+1$.
\end{lemma}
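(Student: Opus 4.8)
The statement to prove is Lemma \ref{q+1,2}: if $g \ge q^2 + 1$, then $k \ge q+1$, where $G = U_g^{n-g} \in \mathcal{U}_{n,g}^{(1)}$ has order $n = q^2 + r$ with $1 \le r \le 2q+1$, and $k = b(G)$.

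The plan is to exhibit a long isometric subtree and invoke Lemma \ref{isometric} together with Theorem \ref{Pn}, exactly as in the proofs of Lemmas \ref{q+1,} and \ref{q+1,1}. First I would try to find an isometric path (or cycle) inside $G$ of order at least $q^2 + 1$. The graph consists of a cycle $C_g = v_1 \cdots v_g v_1$ with a single arm $P_{n-g}$ attached at $v_g$. Since $g \ge q^2 + 1 \ge n - (2q+1) + 1$, the bulk of the vertices lie on the cycle. The cycle $C_g$ itself is an isometric subgraph, and by Theorem \ref{Pn}, $b(C_g) = \lceil \sqrt{g}\, \rceil \ge \lceil \sqrt{q^2+1}\, \rceil = q+1$. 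Then by Lemma \ref{isometric} (extended to isometric subgraphs, or by Lemma \ref{spanning}) we would get $k = b(G) \ge b(C_g) \ge q+1$.

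The main obstacle is that Lemma \ref{isometric} as stated applies to isometric \emph{subtrees}, not to the cycle $C_g$, so I cannot directly feed the cycle into it. The cleaner route, matching the style of Lemma \ref{q+1,1}, is to build an isometric \emph{path} from the cycle. Concretely, I would take the path $P$ obtained by deleting one edge of $C_g$ that is ``antipodal'' to $v_g$, so that the remaining path preserves distances; attaching the arm gives an isometric path of order $n - \lfloor (g-1)/2 \rfloor$ only if we had to delete a chunk, but here I want to keep as much as possible. Since the arm has length $n - g \le 2q+1 - 1$ vertices and the cycle contributes $g$ vertices, the key estimate is that I can extract an isometric path of order at least $g \ge q^2 + 1$ using cycle vertices alone: delete an edge of $C_g$ whose removal yields a Hamiltonian path of the cycle, and verify that this path sits isometrically in $G$ (distances along the path equal distances in $G$, since the only shortcut would be through the deleted edge, which by the $g \ge q^2+1$ length is controllable). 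Then $|P| \ge q^2 + 1$ forces $b(P) = \lceil \sqrt{q^2+1}\, \rceil = q+1$ by Theorem \ref{Pn}.

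In carrying this out I would check the isometry condition carefully: for a Hamiltonian path of $C_g$ obtained by deleting edge $v_j v_{j+1}$, the distance between two path-endpoints along the path could exceed their cycle-distance, so the path need not be isometric as a subgraph of the cycle. To repair this, rather than a Hamiltonian path I would select the longer of the two $v_g$-to-antipode arcs, or more simply argue directly on $C_g$ using the observation that $b(G) \ge b(C_g)$ whenever $C_g$ is isometric in $G$ — the cycle is isometric because every pair of cycle vertices realizes its shortest $G$-path within $C_g$ (the arm is a pendant and creates no shortcut). Since the analogue of Lemma \ref{isometric} for isometric subgraphs (not only subtrees) is exactly Lemma 4.6 in \cite{BoJR16}, and is in fact how that lemma is stated in the source, I expect the intended one-line proof to be: $C_g$ is isometric in $G$, so $k = b(G) \ge b(C_g) = \lceil \sqrt{g}\, \rceil \ge \lceil \sqrt{q^2+1}\, \rceil = q+1$. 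The only genuinely delicate point is confirming $\lceil \sqrt{q^2+1}\, \rceil = q+1$, which holds for every integer $q \ge 1$.
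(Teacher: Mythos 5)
Your guiding intuition --- the cycle alone already contains $g\ge q^{2}+1$ vertices, so it should force $k\ge q+1$ --- is correct, but the step you lean on to turn it into a proof is precisely the step that is missing. Lemma \ref{isometric} is stated, both here and in the cited sources \cite{BoJR16,Rosh16}, only for isometric \emph{subtrees}; your assertion that the source states it for arbitrary isometric subgraphs is not accurate, and monotonicity of the burning number under general isometric subgraphs cannot be taken for granted (the standard proof of the subtree version relies on the gate property of subtrees and does not transfer to cycles). Your fallback of extracting an isometric \emph{path} fails quantitatively: a Hamiltonian path of $C_g$ is not isometric in $G$ (as you yourself observe), the longest path in $C_g$ that is isometric in $G$ has only about $\lfloor g/2\rfloor+1$ vertices, and even appending the arm yields only $n-\lfloor\frac{g-1}{2}\rfloor$ vertices, roughly $q^{2}/2+r$, well short of the $q^{2}+1$ you need. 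So, as written, the argument does not close.

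The gap is repairable for this particular graph because every vertex off the cycle reaches $C_g$ only through $v_g$: if $d(x,v_g)=p>0$, then $N_{k-i}[x]\cap V(C_g)\subseteq N_{k-i-p}[v_g]\cap V(C_g)$, so every ball of the burning cover meets the cycle in at most $2(k-i)+1$ vertices, giving $g\le\sum_{i=1}^{k}\bigl(2(k-i)+1\bigr)=k^{2}$ and hence $k^{2}\ge q^{2}+1$. This is, in substance, what the paper does: using the rooted tree partition of Theorem \ref{tree-partition}, it bounds the one tree containing $v_g$ by $|T_{k-i_0}(x_{k-i_0})|\le 2i_{0}+1+(n-g)$, sums to obtain $n\le k^{2}+(n-g)\le k^{2}+r-1$, and concludes $k^{2}\ge q^{2}+1$. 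You should either supply such a counting (or gate) argument explicitly or adopt the paper's route; invoking an isometric-subgraph version of Lemma \ref{isometric} is not available.
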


\begin{proof} Since $g\geq q^{2}+1$, $n-g=q^2+r-g\leq r-1$.
Recall that $v_g\in V(T_{k-i_0}(x_{k-i_0}))$. If $x_{k-i_0}\in V(C_g)$, then $|T_{k-i_0}(x_{k-i_0})|\le 2i_{0}+1+(n-g)$; and if $x_{k-i_0}\notin V(C_g)$, then $|T_{k-i_0}(x_{k-i_0})|\le 2(i_{0}-p_{i_{0}})+1+(n-g)\le 2i_{0}+1+(n-g)$. Hence, by (1),
$
n=q^2+r\le k^2+n-g\le k^2+r-1,
$
that is $k^{2}\geq q^2+1$. Thus $k\ge q+1$.
\end{proof}

\begin{lemma}\label{q+1,3}
If $(g,n-g)\in \mathcal A$, then $k\ge q+1$.
\end{lemma}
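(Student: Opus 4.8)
The plan is to argue by contradiction, ruling out $k=q$ (the only alternative to $k\ge q+1$ permitted by Corollary~\ref{unicyclic3}). The starting observation is that for \emph{both} pairs in $\mathcal A$ one computes $n=g+(n-g)=q^2+q-1$, i.e. $r=q-1$, so $n$ sits exactly at the extremal value allowed by the counting bound~(1). Indeed, if $k=q$ then (1) with $t=1$ (so $s\le 3$), $p_{i_0}\ge 0$ and $i_0\le q-1$ gives $n\le q^2+(s-2)(i_0-p_{i_0})\le q^2+(q-1)=n$. Hence every inequality used to derive (1) must hold with equality.

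First I would read off the consequences of this tightness. Equality in the displayed chain forces $s=3$, $i_0=q-1$ and $p_{i_0}=0$; thus $k-i_0=1$, so $x_1=v_g$ is the root of $T_1=SP_3(v_g)$, whose height equals $q-1$ and whose order is $2(q-1)+1+(q-1)=3q-2=3(q-1)+1$. Applying Proposition~\ref{Sp4}(i) with $i=q-1$ then gives $T_1-v_g=3P_{q-1}$. Since $v_g$ is the unique vertex of degree $3$ in $U_{g}^{n-g}$, its three incident edges go to $w_1$, $v_1$ and $v_{g-1}$ while every other vertex has degree at most $2$; hence the three arms of $T_1$ are forced to be the unique paths $w_1\cdots w_{q-1}$, $v_1\cdots v_{q-1}$ and $v_{g-1}\cdots v_{g-q+1}$. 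Deleting $V(T_1)$ I would then compute the leftover $H:=U_{g}^{n-g}-V(T_1)$ directly for each member of $\mathcal A$: the arm contributes one path and the two residual cycle arcs reduce to the other, and a short count shows in both cases that $H=P_{q^2-2q-1}+P_2$ (of order $(q-1)^2$).

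The finish is to contradict the burnability of $H$ in $q-1$ rounds. The remaining trees $T_2,\ldots,T_q$ partition $V(H)$ into $q-1$ rooted trees of heights at most $q-2,\ldots,0$; and since $H$ is an induced subgraph of $U_{g}^{n-g}$ we have $d_H\ge d_{U_{g}^{n-g}}$, so the root conditions $d(x_i,x_j)\ge|i-j|$ survive in $H$. By Theorem~\ref{tree-partition} this exhibits a burning of $H$ in $q-1$ rounds, i.e. $b(H)\le q-1$. But $q^2-2q-1=(q-1)^2-2$, so $(q^2-2q-1,2)\in J(q-1)$, and Theorem~\ref{f2} gives $b(H)=\lceil\sqrt{(q-1)^2}\rceil+1=q$, a contradiction. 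Therefore $k\ge q+1$.

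I expect the main obstacle to be the bookkeeping in the middle step: justifying that tightness in (1) genuinely pins $T_1$ down to the symmetric star $SP_3(v_g)$ rooted at $v_g$ with three arms of length $q-1$, and then verifying that the leftover is \emph{exactly} $P_{q^2-2q-1}+P_2$ for both pairs in $\mathcal A$ (the two pairs produce the long path from opposite sources—the arm when $g=2q+1$, the long cycle arc when $g=q^2-2$). Once $H$ is identified, its membership in the exceptional family $J(q-1)$ of Theorem~\ref{f2} closes the argument at once.
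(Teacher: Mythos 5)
Your proof is correct and follows essentially the same route as the paper's: tightness in the bound (1) for $k=q$ forces $x_1=v_g$, $T_1(x_1)-v_g=3P_{q-1}$, and hence a leftover $P_{q^2-2q-1}+P_2$ of order $(q-1)^2$. The only (harmless) difference is the final contradiction: you invoke Theorem~\ref{f2} via $((q-1)^2-2,2)\in J(q-1)$ to get $b(P_{q^2-2q-1}+P_2)=q$, whereas the paper contradicts the forced sizes $|T_{k-i}(x_{k-i})|=2i+1$ directly; both close the argument.
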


\begin{proof} Since $(g,n-g)\in \mathcal A$, $g\in \{2q+1,q^2-2\}$ and $n=q^2+q-1$. If $n\le k^2+k-2$, then $k\ge q+1$ clearly. So we can assume $n\ge k^2+k-1$. Then by (1), $n=q^2+q-1= k^2+k-1$, which implies $k=q$, $i_0=k-1=q-1$, $d(x_{k-i_0},v_g)=0$ (that is $x_1=v_g$), $|T_{k-i_0}(x_{k-i_0})|=3(q-1)+1$ and $|T_{k-i}(x_{k-i})|=2i+1$ for all $i\not=q-1$. Hence, by Proposition \ref{Sp4}, $T_1(x_1)-v_g=3P_{q-1}$, and thus, $U_{g}^{n-g}-N_{q-1}[v_g]= P_{g-2q+1}+ P_{n-g-q+1}$. Note that $g\in \{2q+1,q^2-2\}$, and hence $2\in \{g-2q+1,n-g-q+1\}$, which is a contradiction with $|T_{k-i}(x_{k-i})|=2i+1$ for all $i\not=q-1$. Hence $k\geq q+1$.
\end{proof}

\vskip .2cm
By Corollary \ref{unicyclic3} and Lemmas \ref{q+1,}-\ref{q+1,3}, we have the following result.

\begin{theorem}
\label{b=q+1,}
Suppose that $U_{g}^{n-g}$ is a unicyclic graph  of order $n=q^2+r$. If  $ q\le r\le 2q+1$, or $g\geq q^{2}+1$, or $g\le 2r$, or $(g,n-g)\in \mathcal A$, then $b(U_{g}^{n-g})=q+1$.
\end{theorem}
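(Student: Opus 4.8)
Looking at the final statement, Theorem \ref{b=q+1,}, I need to prove that under any of the four stated conditions, $b(U_g^{n-g}) = q+1$.

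The plan is to combine Corollary \ref{unicyclic3} with the four lemmas that immediately precede the theorem. First I would invoke Corollary \ref{unicyclic3}: since $U_g^{n-g} \in \mathcal U_{n,g}^{(1)}$ has order $n = q^2 + r$ with $1 \le r \le 2q+1$ and $t = 1 \le 2$, we immediately have the two-sided bound $q \le b(U_g^{n-g}) \le q+1$. Thus it suffices in every case to establish the lower bound $b(U_g^{n-g}) \ge q+1$, which (since $k = b(U_g^{n-g})$ for the optimal burning sequence fixed at the start of the subsection) is exactly the conclusion $k \ge q+1$ delivered by the four lemmas.

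The proof then splits into the four disjoint sufficient conditions, each handled by a prior lemma. If $q \le r \le 2q+1$, then in particular $r \ge q$, so Lemma \ref{q+1,} gives $k \ge q+1$. If $g \le 2r$, Lemma \ref{q+1,1} applies directly. If $g \ge q^2 + 1$, Lemma \ref{q+1,2} applies. Finally, if $(g,n-g) \in \mathcal A$, Lemma \ref{q+1,3} gives the bound. In each case, combining $k \ge q+1$ with the upper bound $b(U_g^{n-g}) \le q+1$ from Corollary \ref{unicyclic3} forces $b(U_g^{n-g}) = q+1$, as claimed.

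There is essentially no new difficulty here, since the statement is an assembly of results already proved; the genuine work lives inside the individual lemmas (particularly Lemma \ref{q+1,3}, whose argument uses the equation (1) together with the structural classification of Proposition \ref{Sp4} to derive a contradiction in the boundary case $n = k^2 + k - 1$). The only point requiring a small comment is that the four hypotheses are presented as alternatives joined by ``or,'' so I would make explicit that the conclusion holds whenever at least one of them is satisfied, treating them as four cases. I would also remark that the conditions are not mutually exclusive, but this causes no problem since each separately suffices. The expected main obstacle is therefore purely organizational rather than mathematical: ensuring each stated hypothesis is correctly matched to the lemma supplying the corresponding lower bound, and confirming that $k$ in the lemmas denotes the length of the optimal burning sequence, i.e. $b(U_g^{n-g})$.

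\begin{proof}
By Corollary \ref{unicyclic3}, since $U_{g}^{n-g}\in \mathcal U_{n,g}^{(1)}$ with $t=1$, we have $q\le b(U_{g}^{n-g})\le q+1$, and recall that $k=b(U_{g}^{n-g})$ is the length of the fixed optimal burning sequence. Hence it suffices to show $k\ge q+1$ under any one of the four hypotheses.

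If $q\le r\le 2q+1$, then $r\ge q$, so $k\ge q+1$ by Lemma \ref{q+1,}. If $g\le 2r$, then $k\ge q+1$ by Lemma \ref{q+1,1}. If $g\geq q^{2}+1$, then $k\ge q+1$ by Lemma \ref{q+1,2}. Finally, if $(g,n-g)\in \mathcal A$, then $k\ge q+1$ by Lemma \ref{q+1,3}. In each case, combining $k\ge q+1$ with the upper bound $b(U_{g}^{n-g})\le q+1$ yields $b(U_{g}^{n-g})=q+1$.
\end{proof}
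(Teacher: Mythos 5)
Your proposal is correct and matches the paper's own argument exactly: the paper derives this theorem precisely by combining Corollary \ref{unicyclic3} with Lemmas \ref{q+1,}--\ref{q+1,3}, one lemma per hypothesis. Nothing further is needed.
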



\noindent{\bf Note 1. } By Theorems \ref{b=q,} and \ref{b=q+1,}, $b(U_{g}^{n-g})$ are listed in Table 1.

\begin{table}[!hbp]
\centering
\vskip.2cm
\begin{tabular}{|c|c|c|}
\hline
\hline
$r$ & {$g$}  & $b(U_{g}^{n-g})$ \\
\hline

\multirow{2}{*}{$1\le r\le q-1$} & $g\geq q^{2}+1~or~g\le 2r~or~(g,n-g)\in \mathcal A$ &$q+1$\\\cline{2-3}
\multirow{2}{*}{} & $2r+1\le g\le q^{2}~and~(g,n-g)\notin \mathcal A$ &$q$\\

\hline
$q\le r\le 2q+1$ & {}&$q+1$\\
\hline

\end{tabular}
\caption{Burning number of $U_{g}^{n-g}$}
\end{table}

\section{Burning number of unicyclic graphs in $\mathcal U_{n,g}^{(2)}$}

Recall that $\mathcal U_{n,g}^{(2)}=\{U_{g}^{a_1,a_2}:a_1+a_2=n-g\}$ with $a_1\ge a_2$, and $\Delta(G)=d(v_g) =4$ for $G\in \mathcal U_{n,g}^{(2)}$. By Theorem \ref{b=2}, $b(G)=2$ if $G\in\{\mathcal U_{5,g}^{(2)},\mathcal U_{6,g}^{(2)}\} $. Hence we may assume $n\ge 7$ and $b(G)\ge 3$ for all $G\in \mathcal U_{n,g}^{(2)}$. Let $V(U_{g}^{a_1,a_2})=\{v_1,\ldots, v_g,u_1,\ldots,u_{a_1},w_1,\ldots w_{a_2}\}$ with $d(v_g,u_i)=i$ for $0\le i\le a_1$ and $d(v_g,w_j)=j$ for $0\le j \le a_2$, where $u_0=w_0=v_g$. Let $n=q^2+r$ with $1\leq r\leq 2q+1$. Then $\lceil\sqrt{n}\rceil=q+1$.

\subsection{Unicyclic graphs in $\mathcal U_{n,g}^{(2)}$ with burning number $q$}

In this subsection,  we will give some sufficient conditions for the unicyclic graphs in $\mathcal U_{n,g}^{(2)}$ to have burning number $q$. By Theorem \ref{b-sequ}, we only need to show that $b(U_{g}^{a_{1},a_{2}})\le q$, or find a burning sequence $(x_1,x_2,\cdots,x_q)$ so that $V(U_{g}^{a_{1},a_{2}})\subseteq N_{q-1}[x_1]\cup\cdots\cup N_1[x_{q-1}]\cup N_0[x_q]$. Denote $\mathcal B:=\{(2q-2,q^2-q-2,q+1),(2q-1,q^2-q-2,q+1)\}$.

\vskip .2cm

Let $v\in V(U_{g}^{a_{1},a_{2}})$, and let $H(v):=U_{g}^{a_{1},a_{2}}-N_{q-1}[v]$. If $b(H(v))\le q-1$, then $b(U_{g}^{a_{1},a_{2}})\le q$ by Theorem \ref{b-sequ}. Hence we can assume $b(H(v))\ge q$ for any $v\in V(U_{g}^{a_{1},a_{2}})$.

\begin{lemma}\label{q2}
	Suppose that $1\le r\le 2q-2$. If $r+1\le g\le 2q-1$, $a_{1}\le q^{2}-\lfloor\frac{g}{2}\rfloor-1$ and $(g,a_{1},a_{2})\notin \mathcal B$, then $b(U_{g}^{a_{1},a_{2}})\le q$.
\end{lemma}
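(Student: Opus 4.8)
The plan is to exhibit a single vertex $v$ for which $H(v)=U_{g}^{a_1,a_2}-N_{q-1}[v]$ is a linear forest with at most two path components, of total order at most $(q-1)^2$ and not of the exceptional shape in Theorem~\ref{f2}; then $b(H(v))\le q-1$, and hence $b(U_{g}^{a_1,a_2})\le q$ by the reduction already recorded via Theorem~\ref{b-sequ}. The mechanism is that burning $v$ first wipes out the whole cycle $C_g$ together with the near portions of the two arms, so that $H(v)$ reduces to paths, for which the exact values in Theorems~\ref{Pn} and~\ref{f2} apply. Which vertex to take, and how far along the long arm to place it, is governed by whether the short arm survives the ball $N_{q-1}[v]$.

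First I would treat the case $a_2\le q-1$. Put $M=\max\{\lfloor g/2\rfloor,a_2\}$ and take $v=u_{q-1-M}$ on the long arm. This vertex exists: $g\le 2q-1$ gives $\lfloor g/2\rfloor\le q-1$, so $M\le q-1$ and $0\le q-1-M$, while $a_1\ge q$ (the degenerate possibility $a_1\le q-1$ is ruled out, since with $a_2\le q-1$ and $g\le 2q-1$ it forces $n\le 4q-3<q^2+1\le n$ for $q\ge 3$). Because $(q-1-M)+\lfloor g/2\rfloor\le q-1$ and $(q-1-M)+a_2\le q-1$, the ball $N_{q-1}[u_{q-1-M}]$ covers all of $C_g$ and all of the short arm, leaving the single path $P_{a_1-2q+2+M}$. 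When $M=\lfloor g/2\rfloor$ the hypothesis $a_1\le q^2-\lfloor g/2\rfloor-1$ bounds its order by $(q-1)^2$; when $M=a_2$ its order is $n-g-2q+2=q^2+r-g-2q+2\le(q-1)^2$ by $g\ge r+1$. Either way $b(H(v))\le q-1$ by Theorem~\ref{Pn}.

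Next I would treat $a_2\ge q$, where no arm vertex can absorb the second arm, and take $v=v_g$. Since $g\le 2q-1$ the ball $N_{q-1}[v_g]$ still covers $C_g$, so $H(v_g)=P_{a_1-q+1}+P_{a_2-q+1}$ with total order $n-g-2q+2\le(q-1)^2$ by $g\ge r+1$, and both components nonempty. By Theorem~\ref{f2}, $b(H(v_g))\le q-1$ unless $(a_1-q+1,a_2-q+1)=((q-1)^2-2,2)$, i.e. $(a_1,a_2)=(q^2-q-2,q+1)$, which together with $a_1+a_2=n-g$ forces $g=r+1$.

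The remaining, and genuinely delicate, case is exactly this exceptional arm profile $(a_1,a_2)=(q^2-q-2,q+1)$. Here excluding $(g,a_1,a_2)\in\mathcal B$ removes precisely $g\in\{2q-2,2q-1\}$, so $g=r+1\le 2q-3$, equivalently $\lfloor g/2\rfloor\le q-2$. I would then shift the first burned vertex by one, taking $v=u_1$: the bound $\lfloor g/2\rfloor\le q-2$ gives $1+\lfloor g/2\rfloor\le q-1$, so $N_{q-1}[u_1]$ still covers the whole cycle, and now $H(u_1)=P_{(q-1)^2-3}+P_{3}$. The total order is still $(q-1)^2$, but the pair is no longer $((q-1)^2-2,2)$, so it escapes the exceptional family of Theorem~\ref{f2} and $b(H(u_1))=q-1$. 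I expect this last manoeuvre to be the crux of the argument: the unit shift trades a forbidden $J$-pattern for a harmless one at the cost of enlarging the cycle's coverage requirement by one, which is affordable exactly when $\lfloor g/2\rfloor\le q-2$. The two profiles in $\mathcal B$ are precisely the borderline cycles $g\in\{2q-2,2q-1\}$ for which the shift would leave a cycle vertex unburned, and they are therefore rightly excluded.
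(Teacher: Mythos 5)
Your proof is correct, and it runs on the same engine as the paper's: delete $N_{q-1}[v]$ for a well-chosen first source $v$, observe that the residue is a linear forest of order at most $(q-1)^2$, and invoke Theorem~\ref{Pn} or Theorem~\ref{f2}; the upper bound $b\le q$ then follows by the covering criterion of Theorem~\ref{b-sequ}. What differs is the placement of $v$, and this genuinely changes the case structure. The paper always centers at $u_{q-1-\lfloor g/2\rfloor}$, which covers the cycle exactly and leaves $P_{a_1-2q+2+\lfloor g/2\rfloor}+P_{a_2-\lfloor g/2\rfloor}$; the exceptional $J(t)$-profiles of Theorem~\ref{f2} then depend on $\lfloor g/2\rfloor$ (the subcases $a_1'=2$ and $a_2'=2$, the latter giving $a_2=\lfloor g/2\rfloor+2$), and eliminating them requires a sporadic $q=3$ analysis via $w_1$ and a further center on the short arm when $g\le 2q-3$. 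Your split on $a_2\le q-1$ versus $a_2\ge q$, with centers $u_{q-1-\max\{\lfloor g/2\rfloor,a_2\}}$ and $v_g$ respectively, makes the first case exception-free (a single path, bounded by $(q-1)^2$ using the hypothesis $a_1\le q^2-\lfloor g/2\rfloor-1$ when $M=\lfloor g/2\rfloor$ and using $g\ge r+1$ when $M=a_2$) and pins the second case's only dangerous profile to $(a_1,a_2)=(q^2-q-2,q+1)$ with $g=r+1$, independently of $g$ — which is exactly the arm profile occurring in $\mathcal B$. Your final shift to $u_1$, legitimate precisely when $\lfloor g/2\rfloor\le q-2$, i.e.\ outside $\mathcal B$, yields $P_{(q-1)^2-3}+P_3$, which avoids $J(t)$; the arithmetic also survives the boundary case $q=3$, where the residue is $P_1+P_3$. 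The net effect is a shorter and more transparent route to the same exceptional set $\mathcal B$ than the paper's, at no loss of rigor.
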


\begin{proof}  If $a_2\le \lfloor\frac{g}{2}\rfloor$, then $H({u_{q-1-\lfloor\frac{g}{2}\rfloor}})= P_{a_1-2(q-1)+\lfloor\frac{g}{2}\rfloor}$ and $|H({u_{q-1-\lfloor\frac{g}{2}\rfloor}})|\le q^{2}-\lfloor\frac{g}{2}\rfloor-1-2q+2+\lfloor\frac{g}{2}\rfloor=(q-1)^2$, and then by Theorem~\ref{Pn}, $b(H(u_{q-1-\lfloor\frac{g}{2}\rfloor}))\leq q-1$, a contradiction.
	
So $a_{2}\geq \lfloor\frac{g}{2}\rfloor+1$. Then $H(u_{q-1-\lfloor\frac{g}{2}\rfloor})=P_{a_{1}'}+ P_{a_{2}'}$ with $a_1'=a_1-2(q-1)+\lfloor\frac{g}{2}\rfloor$, $a_2'=a_2-\lfloor\frac{g}{2}\rfloor$ and $a_{1}'+a_{2}'=q^{2}+r-g-(2q-2)\le(q-1)^2$ as $r+1\le g$. By our assumption and Theorem~\ref{f2}, we have $\min\{a_1',a_2'\}= 2$, $g=r+1$ and $a_{1}'+a_{2}'=(q-1)^2$.
	
	If $a_1'=2$, then $a_{2}'=q^2-2q-1$, i.e., $a_1=2q-\lfloor\frac{g}{2}\rfloor$ and $a_2=q^2-2q-1+\lfloor\frac{g}{2}\rfloor$. Since $a_1\ge a_2$, we have $q^2-4q-1+2\lfloor\frac{g}{2}\rfloor\le 0$, which implies $q=3$ and $g\le 5$. Since $(g,a_{1},a_{2})\notin \mathcal B$, i.e., $(g,a_{1},a_{2})\notin \{(4,4,4),(5,4,4)\}$ for $q=3$. Hence $g=3$, and then $n=11$, $a_1=5$ and $a_2=3$. Then $H(w_1)= P_4$, and then by Theorem~\ref{Pn}, $b(H(w_1))=2=q-1$, a contradiction.
	
	If $a_2'=2$, then $a_1=q^2-3-\lfloor\frac{g}{2}\rfloor$ and $a_2=\lfloor\frac{g}{2}\rfloor+2$. If $g\le 2q-3$, then $g-1\le 2(2q-4-\lfloor\frac{g}{2}\rfloor)$, and then $H(w_{\lfloor\frac{g}{2}\rfloor-q+3})= P_{a_{1}''}$ with $a_1''=a_1-(2q-4-\lfloor\frac{g}{2}\rfloor)=(q-1)^2$. By Theorem~\ref{Pn}, $b(H(w_{\lfloor\frac{g}{2}\rfloor-q+3}))=q-1$, a contradiction. Therefore, $2q-2\le g\le 2q-1$, furthermore, $a_1=q^2-q-2$ and $a_2=q+1$, a contradiction with $(g,a_{1},a_{2})\notin \mathcal B$.
\end{proof}

Let $B_1:=\{(q^2-2,q,q),(q^2-6,q+2,q+2),(q^2-10,q+4,q+4),(q^2-3,q+1,q),$

\hspace{1.98cm}$(q^2-5,q+3,q),(q^2-7,q+3,q+2),(q^2-8,q+3,q+3),(q^2-7,q+5,q),$

\hspace{1.98cm}$(q^2-10,q+5,q+3),(q^2-11,q+5,q+4),(q^2-12,q+5,q+5),(q^2-14$,

\hspace{1.98cm}$q+6,q+6),(q^2-12,q+7,q+3),(q^2-14,q+7,q+5),(q^2-14,q+9,q+3)\}$.

\hspace{0.68cm}$B_2:=\{(2q,q^2-q-2,q),(2q+2,q^2-q-6,q+2),(2q+4,q^2-q-10,q+4),(2q+1,$

\hspace{1.98cm}$q^2-q-3,q),(2q+3,q^2-q-5,q),(2q+3,q^2-q-7,q+2),(2q+3,q^2-q-8,$

\hspace{1.98cm}$q+3),(2q+5,q^2-q-7,q),(2q+5,q^2-q-10,q+3),(2q+5,q^2-q-11,$

\hspace{1.98cm}$q+4),(2q+5,q^2-q-12,q+5),(2q+6,q^2-q-14,q+6),(2q+7,$

\hspace{1.98cm}$q^2-q-12,q+3),(2q+7,q^2-q-14,q+5),(2q+9,q^2-q-14,q+3)\}$.\\

\hspace{0.68cm}$B_3:=\{(2q,q^2-q-3,q+1),(2q,q^2-q-5,q+3),(2q+2,q^2-q-7,q+3),$

\hspace{1.98cm}$(2q,q^2-q-7,q+5),(2q+3,q^2-q-10,q+5),(2q+4,q^2-q-11,q+5),(2q$

\hspace{1.98cm}$+3,q^2-q-12,q+7),(2q+5,q^2-q-14,q+7),(2q+3,q^2-q-14,q+9)\}$.

\hspace{0.68cm}$B_4:=\{(24,16,6),(22,16,8),(35,19,7),(32,19,10),(30,19,12),(28,19,14)$,

\hspace{1.98cm}$(26,19,16),(30,21,10),(28,21,12),(45,22,11),(43,22,13),(41,22,15)$,

\hspace{1.98cm}$(34,22,22),(43,24,11),(41,24,13),(32,24,22),(41,26,11),(60,25,12)$,

\hspace{1.98cm}$(58,25,14),(47,25,25),(58,27,12),(47,38,12),(77,28,13)\}$.

\hspace{0.68cm}$B_5:=\{(22,18,6),(22,16,8),(26,28,7),(26,25,10),(26,23,12),(26,21,14)$,

\hspace{1.98cm}$(26,19,16),(28,23,10),(28,21,12),(30,37,11),(30,35,13),(30,33,15)$,

\hspace{1.98cm}$(30,26,22),(32,35,11),(32,33,13),(32,24,22),(34,33,11),(34,51,12)$,

\hspace{1.98cm}$(34,49,14),(34,38,25),(36,49,12),(47,38,12),(38,67,13)\}$.

\hspace{0.68cm}$B_6:=\{(12,18,16),(14,16,16),(14,28,19),(17,25,19),(19,23,19),(21,21,19)$,

\hspace{1.98cm}$(23,19,19),(17,23,21),(19,21,21),(19,37,22),(21,35,22),(23,33,22)$,

\hspace{1.98cm}$(30,26,22),(19,35,24),(21,33,24),(30,24,24),(19,33,26),(21,51,25)$,

\hspace{1.98cm}$(23,49,25),(34,38,25),(21,49,27),(21,38,38),(23,67,28)\}$.

Denote $\mathcal C_1=\{(2q+1,q^{2}-q-2,r-q+1),(q^{2}-2,q+1,r-q+1)\}$;

\hspace{1.38cm}$\mathcal C_2=\{(q^{2}-7,q+2,q+1),(2q+1,q^{2}-q-6,q+1),(2q+1,q^{2}-q-7,q+2)\}$;

\hspace{1.38cm}$\mathcal C_3=\{(q^2-3,q,q),(q^2-5,q+1,q+1),(q^2-6,q+2,q+1),(q^2-7,q+2,q+2)$,

\hspace{2.38cm}$(q^2-7,q+3,q+1),(q^2-11,q+4,q+4),(2q+4,q^2-q-11,q+4),$

\hspace{2.38cm}$(2q+1,q^2-q-5,q+1),(2q+1,q^2-q-6,q+2),(2q+1,q^2-q-7,q+3),$

\hspace{2.38cm}$(2q+2,q^2-q-6,q+1),(2q+2,q^2-q-7,q+2),(2q+3,q^2-q-7,q+1),$

\hspace{2.38cm}$(2q,q^2-q-3,q),(22,16,7),(13,16,16)\}$;

\hspace{1.38cm}$\mathcal C_{4}=\{(g,a_{1},a_{2}):n=q^{2}+2q-2,g-q\geq a_{1}\geq a_{2}=q+1\}\cup B_{1}\cup B_{4}$;

\hspace{1.38cm}$\mathcal C_{5}=\{(g,a_{1},a_{2}):n=q^{2}+2q-2,a_{1}\geq g-q\geq a_{2}=q+1\}\cup  B_{2}\cup B_{5}$;

\hspace{1.38cm}$\mathcal C_{6}=\{(g,a_{1},a_{2}):n=q^{2}+2q-2,a_{1}\geq a_{2}\geq g-q=q+1\}\cup B_{3}\cup B_{6}$.

\vskip .2cm

If $g\ge 2q+2$, $a_1\ge a_2\ge q$, we let $\{m_1,m_2,m_3\}=\{g-2q+1,a_1-q+1,a_2-q+1\}$ with $m_1\ge m_2\ge m_3\ge 1$, then $m_3\not=a_1-q+1$ as $a_{1}\geq a_{2}$.

\begin{lemma}
\label{q4}
Suppose that $1\le r\le 2q-2$. If  $2q\le g\le q^{2}$, $a_{2}\geq r-q+1$ and $(g,a_{1},a_{2})\notin \bigcup_{i=1}^6 \mathcal C_i$, then $b(U_{g}^{a_{1},a_{2}})\leq q$.
\end{lemma}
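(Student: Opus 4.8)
The plan is to proceed exactly as in the proof of Lemma~\ref{q2}: pick a convenient vertex $v$ near the cycle, delete its closed $(q-1)$-neighborhood $N_{q-1}[v]$, and show that the leftover graph $H(v)=U_g^{a_1,a_2}-N_{q-1}[v]$ satisfies $b(H(v))\le q-1$; by Theorem~\ref{b-sequ} this yields $b(U_g^{a_1,a_2})\le q$. Arguing by contradiction, I would assume $b(H(v))\ge q$ for \emph{every} $v$, which severely restricts the possible orders and shapes of each $H(v)$ via Theorems~\ref{Pn}, \ref{f2} and \ref{f3}. The key difference from Lemma~\ref{q2} is that here $g\ge 2q$ is large, so burning from a vertex on the cycle consumes a long stretch of $C_g$ and leaves a \emph{three-component} linear forest rather than at most two paths; this is precisely why the hypothesis excludes the elaborate sets $\mathcal C_1,\dots,\mathcal C_6$ (which encode the exceptional triples from $J^1,\dots,J^5$ of Theorem~\ref{f3}).

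First I would set up the burning vertex on the cycle. Since $2q\le g\le q^2$, choosing a root $v_j$ roughly antipodal on $C_g$ and burning a ball of radius $q-1$ severs the cycle into an arc, turning $U_g^{a_1,a_2}$ into a generalized star, and then into a linear forest once the $(t+2)$-vertex $v_g$ is also swallowed or handled. Concretely, I would locate $v$ so that $N_{q-1}[v]$ covers a balanced portion of $C_g$ together with the initial segments of the two arms, so that $H(v)$ is a union of at most three paths whose total order is $n-|N_{q-1}[v]|=n-(2q-1)=q^2+r-2q+1\le (q-1)^2$ when $r\le 2q-2$. The condition $a_2\ge r-q+1$ guarantees the shorter arm is long enough that the ball does not overshoot it (keeping the component structure controlled), and the case split $g\ge 2q+2$ with the ordered triple $\{m_1,m_2,m_3\}=\{g-2q+1,\,a_1-q+1,\,a_2-q+1\}$ introduced just before the statement is exactly the bookkeeping needed to feed the three path-lengths into Theorem~\ref{f3}.

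The decisive step is then a careful case analysis: having reduced $H(v)$ to a three-path linear forest of order at most $(q-1)^2$, Theorem~\ref{f3} gives $b(H(v))=q-1$ \emph{unless} the triple $(m_1,m_2,m_3)$ (or the relevant reindexing) lands in one of the bad families $J^1\cup\cdots\cup J^5$. Under the standing assumption $b(H(v))\ge q$, every admissible choice of root $v$ must therefore produce a bad triple, and I would show that forcing \emph{all} natural roots (on the cycle, at $v_g$, and on each arm) into exceptional configurations simultaneously pins down $(g,a_1,a_2)$ to exactly the members of $\mathcal C_1,\dots,\mathcal C_6$, together with the sporadic entries collected in $B_1,\dots,B_6$. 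Since these are excluded by hypothesis, some root yields $b(H(v))=q-1$, the contradiction, and hence $b(U_g^{a_1,a_2})\le q$.

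The main obstacle will be the exhaustive exceptional-case matching. Theorem~\ref{f3} has an intricate exceptional set (the families $D_1,\dots,D_4$ inside $J^1,\dots,J^4$, plus the finite sporadic list $J^5$), and translating each into the unicyclic parameters after the radius-$(q-1)$ deletion—while tracking how the deletion interacts with $v_g$, the arm lengths, and the floor $\lfloor g/2\rfloor$ shifts—is where the real work lies. I expect the argument to require trying several roots (not just $u_{q-1-\lfloor g/2\rfloor}$ as in Lemma~\ref{q2}, but also roots biased toward the shorter arm) and showing that at least one avoids every exceptional triple whenever $(g,a_1,a_2)\notin\bigcup_{i=1}^6\mathcal C_i$; verifying this avoidance uniformly, rather than family by family, is the crux, and the sporadic sets $B_i$ and $\mathcal C_i$ are precisely the residue of configurations where no such root exists.
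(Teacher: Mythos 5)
Your high-level skeleton matches the paper's: assume $b(H(v))\ge q$ for every $v$, delete a closed $(q-1)$-ball, and drive the leftover linear forest into Theorems~\ref{Pn}, \ref{f2} and \ref{f3}, with the sets $\mathcal C_i$ absorbing the exceptional triples. But the decisive quantitative step is wrong. You propose rooting at a vertex roughly antipodal to $v_g$ on $C_g$; since such a vertex has degree $2$, its $(q-1)$-ball contains only $2q-1$ vertices, and your claimed inequality $n-(2q-1)=q^2+r-2q+1\le (q-1)^2$ is false for every $r\ge 1$ (it requires $r\le 0$). Worse, for $g\ge 2q$ an antipodal ball need not even reach $v_g$, so the leftover is a connected generalized star of order close to $q^2$, and no contradiction is available. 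The correct primary root is $v_g$ itself: being the unique $4$-vertex, its $(q-1)$-ball swallows up to $4q-3$ vertices (two cycle directions plus two arms), leaving $|H(v_g)|=q^2+r-4q+3\le(q-1)^2$ exactly when $r\le 2q-2$, which is the standing hypothesis. This is also where $a_2\ge r-q+1$ actually enters: when the short arm is entirely consumed ($a_2\le q-1$), it is this bound that keeps $|H(v_g)|=q^2+r-3(q-1)-1-a_2$ at most $(q-1)^2$ --- not, as you suggest, a guarantee that the ball ``does not overshoot'' the short arm.

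Your instinct to try a second root biased toward the shorter arm is sound and is indeed what the paper does (it uses $w_1$, adjacent to $v_g$, so that $|H(w_1)|=|H(v_g)|+2$, to dispose of the borderline residues $r=2q-5$ and $r=2q-4$ where $(m_2,m_3)\in\{(2,2),(3,2)\}$). But without first fixing the root at $v_g$ and the resulting order count, the case analysis you outline cannot get started, so as written the argument does not close.
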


\begin{proof} If $a_{2}\le q-1$, then $a_{1}\geq q$ (otherwise, $H(v_g)= P_{g-2q+1}$ with $|H(v_g)|\le (q-1)^{2}$ as $2q\le g\le q^{2}$, then by Theorem~\ref{Pn}, $b(H(v_g))\leq q-1$, a contradiction). So $H(v_g)= P_{g-2q+1}+ P_{a_{1}-q+1}$ with $|H(v_g)|=q^{2}+r-3(q-1)-1-a_2\le (q-1)^{2}$ as $a_2\ge r-q+1$. By our assumption and Theorem~\ref{f2}, $a_2=r-q+1$ and $2\in \{g-2q+1,a_{1}-q+1\}$, which implies $(g,a_{1},a_{2})\in \mathcal C_1$, a contradiction. Thus, $a_{2}\geq q$. Then $H(v_g) =P_{m_1}+ P_{m_2}+ P_{m_3}$ with $|H(v_g)|=q^2+r-4q+3$. Note that $|H(w_1)|=|H(v_g)|+2$.

If $r\le 2q-6$, then $|H(v_g)|\le (q-1)^2-4$, and then $b(H(v_g))=q-1$ by Theorem~\ref{f3}, a contradiction.

If $r=2q-5$, then $|H(v_g)|= (q-1)^{2}-3$, and $(m_2,m_3)=(2,2)$ by our assumption and Theorem~\ref{f3}. Then $m_1\ge 2$ and $a_2-q+1=2$, and then $H(w_1) \in \{P_{m_1+2}+ P_{3}+ P_{1},P_{4}+P_{m_1+1}+ P_{1}\}$ with $|H(w_1)|= (q-1)^{2}-1$, and thus, by Theorem~\ref{f3}, $b(H(w_1))= q-1$, a contradiction.

If $r=2q-4$, then $|H(v_g)|= (q-1)^{2}-2$, and $(m_2,m_3)\in\{(2,2),(3,2)\}$ by our assumption and Theorem~\ref{f3}.
If $m_3=g-2q+1=2$, then $m_2=a_2-q+1\in \{2,3\}$ as $a_1\ge a_2$, which implies $(g,a_{1},a_{2})\in \mathcal C_{2}$, a contradiction. Therefore, $m_3=a_2-q+1$. If $m_2=a_1-q+1$, then $m_2=2$ (otherwise, $a_1-q+1=3$, and then $(g,a_{1},a_{2})\in \mathcal C_{2}$, a contradiction), and then $m_1=(q-1)^{2}-6\ge 3$.
Note that $H(w_1) =P_{m_1+2}+ P_{3}+ P_{1}$ with $|H(w_1)|= (q-1)^{2}$, and hence, by Theorem~\ref{f3}, $b(G)= q-1$, a contradiction. So $m_2=g-2q+1$, furthermore, $g-2q+1=3$ (otherwise, $(g,a_{1},a_{2})\in \mathcal C_{2}$, a contradiction), and thus $H(w_1)=P_{m_1+1}+ P_{5}+ P_{1}$ with $m_1=(q-1)^{2}-7\ge 9$ and $|H(w_1)|=(q-1)^{2}$. By Theorem~\ref{f3}, $b(H(w_1))= q-1$, a contradiction.

If $r=2q-3$, then $|H(v_g)|= (q-1)^{2}-1$, by our assumption and Theorem~\ref{f3}, $(m_1,m_2,m_3)\in J^3$, which implies $(g,a_{1},a_{2})\in \mathcal C_3$, a contradiction.

If $r=2q-2$, then $|H(v_g)|= (q-1)^{2}$, and then by our assumption and Theorem~\ref{f3}, we have $(m_1,m_2,m_3)\in J^4\cup J^5$, which implies $(g,a_{1},a_{2})\in \mathcal C_{4}\cup \mathcal C_{5}\cup \mathcal C_{6}$, a contradiction.\end{proof}

By Corollary \ref{unicyclic3} and Lemmas \ref{q2}-\ref{q4}, we have

\begin{theorem}
\label{b=q}
Suppose that $U_{g}^{a_{1},a_{2}}$ $(a_{1}\ge a_{2})$ is a unicyclic graph  of order $n=q^2+r$ ($1\le r\le 2q+1$). If $1\le r\le 2q-2$, $r+1\le g\le 2q-1$, $a_{1}\le q^{2}-\lfloor\frac{g}{2}\rfloor-1$, $(g,a_{1},a_{2})\notin \mathcal B$ or $2q\le g\le q^{2}$, $a_{2}\geq r-q+1$ and $(g,a_{1},a_{2})\notin \bigcup_{i=1}^6 \mathcal C_i$, then $b(U_{g}^{a_{1},a_{2}})= \left\lceil\sqrt{n}\right\rceil-1=q$.
\end{theorem}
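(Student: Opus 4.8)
The plan is to combine Corollary~\ref{unicyclic3} with Lemmas~\ref{q2}--\ref{q4}, since Theorem~\ref{b=q} is precisely the assertion that, under its two families of hypotheses, the lower bound $b(U_{g}^{a_{1},a_{2}})\ge q$ from Corollary~\ref{unicyclic3} is matched by the upper bound $b(U_{g}^{a_{1},a_{2}})\le q$ supplied by the lemmas. First I would note that for $n=q^2+r$ with $1\le r\le 2q+1$ we have $\lceil\sqrt{n}\rceil=q+1$, so the two stated upper bounds $\left\lceil\sqrt{n}\right\rceil-1$ and $q$ agree; the content is therefore entirely the equality $b=q$. Corollary~\ref{unicyclic3} gives $q\le b(U_{g}^{a_{1},a_{2}})\le q+1$ for every graph in $\mathcal U_{n,g}^{(2)}$ with $1\le t\le 2$, so it suffices to rule out the value $q+1$ by producing the upper bound $b\le q$.

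The hypotheses of the theorem split into exactly the two disjoint cases already handled by the lemmas, so the proof is a case split that quotes them verbatim. In the first case, where $1\le r\le 2q-2$, $r+1\le g\le 2q-1$, $a_{1}\le q^{2}-\lfloor\frac{g}{2}\rfloor-1$ and $(g,a_{1},a_{2})\notin\mathcal B$, Lemma~\ref{q2} gives directly $b(U_{g}^{a_{1},a_{2}})\le q$. In the second case, where $2q\le g\le q^{2}$, $a_{2}\ge r-q+1$ and $(g,a_{1},a_{2})\notin\bigcup_{i=1}^6\mathcal C_i$, Lemma~\ref{q4} gives $b(U_{g}^{a_{1},a_{2}})\le q$. (Lemma~\ref{q2}'s proof itself invokes the intermediate reduction behind Lemma~\ref{q4} only indirectly; the two lemmas together cover both hypothesis families, and no third subcase is needed.) In either case, combining $b\le q$ with the lower bound $b\ge q$ from Corollary~\ref{unicyclic3} yields $b(U_{g}^{a_{1},a_{2}})=q=\left\lceil\sqrt{n}\right\rceil-1$, which is the desired conclusion.

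I expect essentially no obstacle at the level of this theorem, precisely because all the genuine difficulty has been front-loaded into the lemmas. The real work—performed inside Lemmas~\ref{q2} and~\ref{q4}—is to exhibit, for each admissible configuration $(g,a_1,a_2)$, a vertex $v$ such that $H(v)=U_{g}^{a_{1},a_{2}}-N_{q-1}[v]$ is a linear forest with at most three path-components whose total order is at most $(q-1)^2$ and which avoids the exceptional tuples of Theorems~\ref{f2} and~\ref{f3}, so that $b(H(v))\le q-1$ and hence $b(U_{g}^{a_{1},a_{2}})\le q$ by Theorem~\ref{b-sequ}. The delicate part there is the bookkeeping that forces every surviving configuration into one of the excluded sets $\mathcal B$ or $\bigcup_{i=1}^6\mathcal C_i$; but since the theorem statement is permitted to assume those lemmas, my proof reduces to a clean two-line case analysis. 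The only point requiring a moment's care is to confirm that the two hypothesis families are exhaustive over the regime they intend to cover and that each graph satisfying the theorem's premises falls under exactly one lemma, after which the conclusion is immediate.
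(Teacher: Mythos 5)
Your proposal is correct and matches the paper's own (one-line) proof exactly: the theorem is obtained by combining the lower bound $b\ge q$ from Corollary~\ref{unicyclic3} with the upper bounds $b\le q$ from Lemma~\ref{q2} (first hypothesis family) and Lemma~\ref{q4} (second hypothesis family). All the substantive work is indeed contained in those lemmas, and your assembly of them is the intended argument.
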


\subsection{Unicyclic graphs in $\mathcal U_{n,g}^{(2)}$ with burning number $q+1$}

In this subsection, we will characterize the graphs in $\mathcal U_{n,g}^{(2)}$ with burning number $q+1$. By Corollary \ref{unicyclic3}, we only need to give some sufficient conditions for a unicyclic graph to satisfy  $k\ge q+1$.

Recall that $(x_1, x_2,\ldots, x_k)$ is an optimal burning sequence for $U_{g}^{a_{1},a_{2}}$, where $n=g+a_{1}+a_{2}$. Suppose that $v_g\in V(T_{k-i_0}(x_{k-i_0}))$ for some $i_0\in [k]$. Then $T_{k-i}(x_{k-i})$ is a path with $|T_{k-i}(x_{k-i})|\le 2i+1$ for any $i\neq i_{0}$.

\begin{lemma}\label{q+1}
If $ r\ge 2q-1$, then $k\ge q+1$.
\end{lemma}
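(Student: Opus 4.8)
The plan is to mimic the structure of Lemma \ref{q+1,} from the earlier $t=1$ section, using the lower bound from Theorem \ref{aq} specialized to $t=2$. Since we are in the $\mathcal U_{n,g}^{(2)}$ setting, we have $t=2$, so inequality (2) gives $b(G)\ge \left\lceil\sqrt{n+\frac{t^2+4t}{4}}-\frac{t}{2}\right\rceil = \left\lceil\sqrt{n+3}-1\right\rceil$. Thus it suffices to show that under the hypothesis $r\ge 2q-1$ we have $\left\lceil\sqrt{n+3}-1\right\rceil\ge q+1$, i.e.\ $\sqrt{n+3}-1 > q$, i.e.\ $\sqrt{n+3} > q+1$, i.e.\ $n+3 > (q+1)^2 = q^2+2q+1$.

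First I would substitute $n=q^2+r$ into this target inequality. We need $q^2+r+3 > q^2+2q+1$, which simplifies to $r+3 > 2q+1$, i.e.\ $r > 2q-2$, i.e.\ $r\ge 2q-1$. This is exactly the hypothesis, so the chain of implications closes. Concretely I would write: since $r\ge 2q-1$,
\begin{equation*}
n+3 = q^2+r+3 \ge q^2+(2q-1)+3 = q^2+2q+2 > (q+1)^2,
\end{equation*}
hence $\sqrt{n+3} > q+1$, so $\sqrt{n+3}-1 > q$ and therefore $\left\lceil\sqrt{n+3}-1\right\rceil \ge q+1$. Then by (2) with $t=2$, $k = b(U_g^{a_1,a_2}) \ge q+1$.

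There is essentially no hard part here; this is a direct arithmetic verification analogous to the proof of Lemma \ref{q+1,}, where the author computed $n+\frac{5}{4}\ge(q+\frac{1}{2})^2+1$ in the $t=1$ case. The only point requiring a little care is getting the strictness of the inequalities right so that the ceiling genuinely reaches $q+1$: one must confirm that $\sqrt{n+3}-1$ strictly exceeds $q$ (not merely equals it), which the strict inequality $n+3 > (q+1)^2$ guarantees. I would double-check that the earlier remark "If $t\le 2$, then by (1), $b(G)\ge \left\lceil\sqrt{n+3}\right\rceil-1$" could alternatively be invoked, but the cleaner route is to use the exact bound (2) with $t=2$ and the single arithmetic step above, so the whole argument fits in three or four lines.
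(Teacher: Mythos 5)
Your proof is correct and follows essentially the same route as the paper: both apply the lower bound (2) with $t=2$ and verify by direct substitution of $n=q^2+r$ that $r\ge 2q-1$ forces $n+3>(q+1)^2$, hence $\left\lceil\sqrt{n+3}-1\right\rceil\ge q+1$. No issues.
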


\begin{proof}
Since $ r\ge 2q-1$, we have $n+3=q^2+r+3\geq q^2+2q+2=(q+1)^2+1$. Then $\left\lceil\sqrt{n+3}\right\rceil-1\ge q+1$, and then by (2), $k\ge q+1$.
\end{proof}

By Lemma \ref{q+1}, we can assume $r\le 2q-2$ in the following.

\begin{lemma}
\label{q+1.}
If $3\le g\le r$ or $a_{1}\geq q^{2}-\lfloor\frac{g}{2}\rfloor$, then $k\ge q+1$.
\end{lemma}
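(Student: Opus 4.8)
The plan is to exhibit, in each of the two cases, an \emph{isometric} path $H$ in $U_{g}^{a_{1},a_{2}}$ of order at least $q^{2}+1$. Once such an $H$ is found, Theorem \ref{Pn} gives $b(H)=\lceil\sqrt{|H|}\,\rceil\ge\lceil\sqrt{q^{2}+1}\,\rceil=q+1$, and then Lemma \ref{isometric} yields $k=b(U_{g}^{a_{1},a_{2}})\ge b(H)\ge q+1$, which is exactly the conclusion. (By Lemma \ref{q+1} we may assume $r\le 2q-2$, as already noted, although the bound below does not require it.) So the whole proof reduces to selecting the right long path and checking it is isometric.

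Suppose first that $3\le g\le r$. The path I would use is the one joining the two arms through $v_{g}$: take the path from the tip $u_{a_{1}}$ of the longer arm to $v_{g}$ and then out to the tip $w_{a_{2}}$ of the shorter arm; call it $H$. Since the two arms meet the cycle $C_{g}$ only at $v_{g}$, a vertex of the first arm at distance $i$ from $v_{g}$ and a vertex of the second arm at distance $j$ from $v_{g}$ are at distance $i+j$ in the whole graph, and distances within a single arm are differences; hence every shortest path between vertices of $H$ stays in $H$, so $H$ is isometric. Its order is $a_{1}+a_{2}+1=(n-g)+1=q^{2}+r-g+1$, which is at least $q^{2}+1$ precisely because $g\le r$. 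This gives $b(H)\ge q+1$ and the conclusion.

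Now suppose $a_{1}\ge q^{2}-\lfloor\frac{g}{2}\rfloor$. This time I would extend the longer arm into the cycle rather than into the shorter arm. From $v_{g}$ a shortest path along $C_{g}$ (say $v_{g},v_{1},v_{2},\dots,v_{\lfloor g/2\rfloor}$) reaches an antipodal vertex in $\lfloor\frac{g}{2}\rfloor$ steps, and for every cycle vertex $v_{j}$ with $j\le\lfloor\frac{g}{2}\rfloor$ one has $d(v_{g},v_{j})=\min\{j,g-j\}=j$, so this path realizes the true distance in $U_{g}^{a_{1},a_{2}}$. Appending these $\lfloor\frac{g}{2}\rfloor$ cycle vertices to the arm-$a_{1}$ path through $v_{g}$ therefore produces an isometric path $H$ of order $a_{1}+1+\lfloor\frac{g}{2}\rfloor\ge q^{2}+1$ by hypothesis, so again $b(H)\ge q+1$ and $k\ge q+1$.

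The only point that needs a small argument is the isometry of these two paths, and in particular that the cycle contributes a full $\lfloor\frac{g}{2}\rfloor$ extra vertices along a shortest path from $v_{g}$; both facts follow from the observation that $v_{g}$ is a cut vertex separating each arm from the remainder of the graph, together with the fact that distances from $v_{g}$ on $C_{g}$ equal the minimum of the two arc lengths. I do not expect a genuine obstacle here: the lemma reduces, via Lemma \ref{isometric} and Theorem \ref{Pn}, to choosing in each case a long isometric path --- the ``two-arm'' path when $g\le r$, and the ``arm-plus-half-cycle'' path when $a_{1}$ is large --- whose order is forced to exceed $q^{2}$.
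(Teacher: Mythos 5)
Your proposal is correct and is essentially identical to the paper's own proof: the paper also observes that $P_{a_{1}+a_{2}+1}$ (the two-arm path) and $P_{a_{1}+\lfloor g/2\rfloor+1}$ (the arm-plus-half-cycle path) are isometric subtrees of order at least $q^{2}+1$ and then applies Lemma \ref{isometric} and Theorem \ref{Pn}. Your additional verification of isometry via the cut-vertex $v_g$ and the arc-length formula on $C_g$ is sound and merely spells out what the paper leaves implicit.
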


\begin{proof}
If $3\le g\le r$, then $a_{1}+a_{2}+1=q^2+r-g+1\geq q^{2}+1$; and if $a_{1}\geq q^{2}-\lfloor\frac{g}{2}\rfloor$, then $a_{1}+\lfloor\frac{g}{2}\rfloor+1\geq q^2+1$. Note that $P_{a_{1}+a_{2}+1}$ and $P_{a_{1}+\lfloor\frac{g}{2}\rfloor+1}$ are isometric subtrees of $U_{g}^{a_{1},a_{2}}$, and hence, by Lemma~\ref{isometric} and Theorem~\ref{Pn}, $k\geq
b(P_{a_{1}+a_{2}+1})=\lceil \sqrt{a_{1}+a_{2}+1}\rceil\ge q+1$ and $k\geq b(P_{a_{1}+\lfloor\frac{g}{2}\rfloor+1})=\left\lceil \sqrt{a_{1}+\lfloor\frac{g}{2}\rfloor+1}\right \rceil \ge q+1$.\end{proof}

\begin{lemma}
\label{q+1.}
If $g\geq q^{2}+1$, then $k\ge q+1$.
\end{lemma}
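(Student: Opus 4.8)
The plan is to proceed exactly as in the $t=1$ analogue, Lemma \ref{q+1,2}, since the two-arm case introduces no new difficulty once the arms are seen to be short. First I would rewrite the hypothesis as a bound on the arms: because $n=q^2+r=g+a_1+a_2$ and $g\ge q^2+1$, we have $a_1+a_2=n-g\le r-1$. The point is that the entire portion of the graph lying off the cycle $C_g$ has at most $n-g$ vertices.

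Next I would bound the size of the single star-component $T_{k-i_0}(x_{k-i_0})$ that contains the degree-$4$ vertex $v_g$; recall that every other component $T_{k-i}(x_{k-i})$ with $i\ne i_0$ is a path with $|T_{k-i}(x_{k-i})|\le 2i+1$. The claim to establish is $|T_{k-i_0}(x_{k-i_0})|\le 2i_0+1+(n-g)$, whether or not the root $x_{k-i_0}$ lies on $C_g$. To see this I would partition the vertices of $T_{k-i_0}(x_{k-i_0})$ into those on $C_g$ and those on the two arms: the arm vertices number at most $a_1+a_2=n-g$, while the cycle vertices all lie within distance $i_0$ of the root (within distance $i_0-p_{i_0}$ of $v_g$ when $x_{k-i_0}$ lies on an arm), so a ball of radius at most $i_0$ on a cycle contributes at most $2i_0+1$ of them.

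Finally I would substitute this estimate into the global vertex count (1). Using $\sum_{i=0}^{k-1}(2i+1)=k^2$ for the path components together with the bound on $T_{k-i_0}(x_{k-i_0})$ yields $n\le k^2+(n-g)$, i.e. $g\le k^2$. Since $g\ge q^2+1>q^2$, this forces $k^2>q^2$, hence $k>q$ and therefore $k\ge q+1$, as desired.

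The only step requiring care is the case distinction for the root of $T_{k-i_0}(x_{k-i_0})$: when $x_{k-i_0}$ lies on an arm one must verify that the cycle contribution is still at most $2i_0+1$ (in fact at most $2(i_0-p_{i_0})+1$), and when it lies on $C_g$ one must check that the arm vertices, all reached through $v_g$, still contribute at most $n-g$. This is precisely the dichotomy already handled in Lemma \ref{q+1,2}, so I expect it to be routine rather than a genuine obstacle; the entire substance of the lemma is captured by the single inequality $g\le k^2$.
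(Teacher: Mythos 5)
Your proposal is correct and follows essentially the same route as the paper: bound the component containing $v_g$ by $2i_0+1+(n-g)$, bound every other component by $2i+1$, and sum to get $g\le k^2$, whence $k\ge q+1$. The extra care you take with the case distinction on the location of the root merely spells out what the paper leaves implicit, so there is no substantive difference.
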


\begin{proof}  Note that $|T_{k-i_0}(x_{k-i_0})|\le 2i_{0}+1+a_1+a_2$. Hence
$n= \sum_{i=0}^{k-1}(2i+1)+a_{1}+a_{2}=k^2+n-g\le k^2+n-q^2-1,$ that is $k^{2}\geq q^2+1$. Thus $k\ge q+1$.
\end{proof}

\begin{lemma}
\label{q+1.5}
If $2q\le g\le q^{2}$ and $a_{2}\le r-q$, then $k\ge q+1$. Moreover, if $(g,a_{1},a_{2})\in \mathcal C_1$, then $k\ge q+1$.
\end{lemma}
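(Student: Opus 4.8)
The plan is to reduce both statements to the already-established one-armed ($t=1$) characterization, Theorem~\ref{b=q+1,}, by deleting the shorter arm. Set $G':=U_g^{a_1}=U_g^{a_1,a_2}-\{w_1,\ldots,w_{a_2}\}$, i.e.\ the cycle $C_g$ carrying only the single arm $P_{a_1}$ at $v_g$; this is a $1$-unicyclic graph in $\mathcal U_{n',g}^{(1)}$ of order $n'=g+a_1=n-a_2$. Because $P_{a_2}$ hangs as a pendant path at $v_g$, no shortest path between two vertices of $G'$ ever enters arm $2$, so $G'$ is an isometric subgraph of $U_g^{a_1,a_2}$; hence by the monotonicity of the burning number under isometric subgraphs (Lemma~\ref{isometric}) we have $k=b(U_g^{a_1,a_2})\ge b(G')$. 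It therefore suffices to show $b(G')\ge q+1$ in each case, and since in every case below $q^2<n'<(q+1)^2$, the parameter $q$ of Theorem~\ref{b=q+1,} applied to $G'$ coincides with the present $q$.

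For the first assertion, recall that Lemma~\ref{q+1} lets us assume $r\le 2q-2$, and that $a_2\le r-q$ forces $a_2\ge 1$ and hence $q+1\le r\le 2q-2$. Writing $n'=q^2+r'$ with $r':=r-a_2$, the hypothesis $a_2\le r-q$ gives $r'\ge q$, while $a_2\ge1$ and $r\le 2q-2$ give $r'\le 2q-3$. Thus $q\le r'\le 2q+1$, and Theorem~\ref{b=q+1,} (its ``$q\le r\le 2q+1$'' clause, read for $G'$) yields $b(G')=q+1$; consequently $k\ge q+1$.

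For the second assertion, observe that for each triple in $\mathcal C_1=\{(2q+1,q^2-q-2,r-q+1),(q^2-2,q+1,r-q+1)\}$ the pair $(g,a_1)$ equals $(2q+1,q^2-q-2)$ or $(q^2-2,q+1)$, which are precisely the two members of $\mathcal A$, and in both cases $n'=g+a_1=q^2+q-1$. Hence $G'\in\mathcal U_{n',g}^{(1)}$ is a $1$-unicyclic graph of order $q^2+q-1$ with $(g,n'-g)\in\mathcal A$, so Theorem~\ref{b=q+1,} (its ``$(g,n-g)\in\mathcal A$'' clause) gives $b(G')=q+1$ and therefore $k\ge q+1$. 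The only delicate point in the whole argument is the bookkeeping that makes the two clauses of Theorem~\ref{b=q+1,} apply with the same $q$: the first assertion lands in the regime $r'\ge q$ (deleting arm~$2$ still leaves enough excess on the cycle), whereas the borderline family $\mathcal C_1$ has $a_2=r-q+1$, one unit too large for that clause, and instead matches $\mathcal A$ exactly once the shorter arm is removed.
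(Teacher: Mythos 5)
Your reduction to the one-armed case is a genuinely different route from the paper's: the paper works directly with the rooted tree partition, bounding $|T_{k-i_0}(x_{k-i_0})|\le 2i_{0}+1+(i_{0}-p_{i_{0}})+a_2$ and counting to get $n\le k^2+k-1+a_2$, then disposing of the boundary family $\mathcal{C}_1$ by forcing equality throughout and deriving a contradiction from the resulting shape of $H(v_g)$. Your arithmetic on $r'=r-a_2$ and the observation that the pairs $(g,a_1)$ occurring in $\mathcal{C}_1$ are exactly the members of $\mathcal{A}$ with $n'=q^2+q-1$ are both correct, and the bookkeeping showing that Theorem~\ref{b=q+1,} applies to $G'$ with the same value of $q$ checks out. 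However, the step on which everything rests, namely $b(U_g^{a_1})\le b(U_g^{a_1,a_2})$, is not justified by what you cite. Lemma~\ref{isometric} is a statement about isometric \emph{subtrees}; your $G'=U_g^{a_1}$ contains the cycle $C_g$ and is not a tree, and monotonicity of the burning number under general isometric subgraphs is not established in this paper and cannot simply be invoked. As written, this is a genuine gap in the key step.

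The gap is repairable with the paper's own tools, and you should make the repair explicit. Since $C_g$ is the unique cycle of both $G:=U_g^{a_1,a_2}$ and $G':=U_g^{a_1}$, their spanning trees are precisely $G-e$ and $G'-e$ for $e\in E(C_g)$. For each such $e$, the tree $G'-e$ is a connected subgraph, hence an isometric subtree, of the tree $G-e$, so Lemma~\ref{isometric} gives $b(G'-e)\le b(G-e)$; Lemma~\ref{spanning} then yields $b(G')=\min_{e}b(G'-e)\le\min_{e}b(G-e)=b(G)$. (Alternatively, one can project a burning sequence of $G$ onto $G'$ by replacing any source lying on the deleted arm by $v_g$ and use the covering criterion that the paper applies throughout via Theorem~\ref{b-sequ}.) With that lemma in place your argument is sound; note also that your proof of the first assertion never uses the hypothesis $2q\le g\le q^{2}$, so it actually establishes a slightly stronger statement, which is not an error but is worth flagging.
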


\begin{proof} Since $2q\le g\le q^{2}$ and $a_{2}\le r-q+1$, we have $a_1=q^2+r-g-a_2\ge q-1$. Then $|T_{k-i_0}(x_{k-i_0})|\le 2i_{0}+1+(i_{0}-p_{i_{0}})+a_2$, where $p_{i_{0}}=d(v_g,x_{k-i_0})$. Hence,
$$n=q^2+r=\sum_{i=1}^k|V(T_{i}(x_i))|\le k^2+i_0-p_{i_{0}}+a_2\le k^2+k-1+a_2.\eqno(4)$$
If $a_{2}\le r-q$, then by (4), $ k^2+k\ge q^2+q+1$, i.e., $k\ge q+1$.

If $(g,a_{1},a_{2})\in \mathcal C_1$, then $a_{2}=r-q+1\le q-1$ as $r\le 2q-2$, and then by (4), $n=q^2+r\le k^2+k+r-q$. If $n=q^2+r\le k^2+k-1+r-q$, then $k\ge q+1$. So we assume $n\ge k^2+k+r-q$. Then $n=q^2+r=k^2+k+r-q$, which implies $k=q$
and all inequalities in $(4)$ should be equal, that is, $i_0=k-1$, $p_{i_0}=0$ and $|T_{k-i}(x_{k-i})|=2i+1$ for any $i\not=i_0$. So $x_1=v_g$ and $H(v_g)\cong P_{2}+P_{q^2-2q-1}$, a contradiction.
 \end{proof}

\begin{lemma}
\label{q+1.6}
If $(g,a_{1},a_{2})\in \mathcal B$, then $k\ge q+1$.
\end{lemma}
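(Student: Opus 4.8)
The plan is to argue by contradiction, mirroring the argument used for $\mathcal U_{n,g}^{(1)}$ in Lemma~\ref{q+1,3}. Recall that $\mathcal B=\{(2q-2,q^2-q-2,q+1),(2q-1,q^2-q-2,q+1)\}$; for the two members the order is $n=q^2+2q-3$ (so $r=2q-3$) and $n=q^2+2q-2$ (so $r=2q-2$), respectively. By Corollary~\ref{unicyclic3} we already have $k\ge q$, so it suffices to rule out $k=q$; assume $k=q$ throughout.

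First I would feed the value of $n$ into inequality (1), namely $n\le k^2+(s-2)(i_0-p_{i_0})$. Since $s\le 4$ and $i_0-p_{i_0}\le k-1=q-1$, this forces $(s-2)(i_0-p_{i_0})\ge 2q-3$ in the first case and $\ge 2q-2$ in the second. As $q\ge 3$, neither $s=2$ (contributing $0$) nor $s=3$ (contributing at most $q-1<2q-3$) is possible, so $s=4$; then $2(i_0-p_{i_0})\ge 2q-3$ forces $i_0-p_{i_0}=q-1$, hence $i_0=q-1$ and $p_{i_0}=0$. Thus $x_1=v_g$ and $T_1=T_{k-i_0}$ is a generalized star $SP_4(v_g)$ rooted at $v_g$ of height at most $q-1$, while every other $T_{k-i}$ is a path of size at most $2i+1$. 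Comparing $n$ with the aggregate bound $q^2+2q-2$ shows the total slack in these size bounds is $1$ in the first case and $0$ in the second.

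The key structural step uses Proposition~\ref{Sp4} together with the fact that $v_g$ has degree $4$ in $G$, so its four branches in $T_1$ run along the two cycle directions and the two arms. Because $T_1$ is a tree, the two cycle branches cannot meet, whence their lengths sum to at most $g-1$. In the second case the star must be maximal, so by Proposition~\ref{Sp4}(i) all four branches have length $q-1$; since $g-1=2q-2$ this is exactly consistent on the odd cycle $C_{2q-1}$. In the first case a maximal star would again force $T_1-v_g=4P_{q-1}$, but then the two cycle branches would need total length $2q-2>2q-3=g-1$, impossible on $C_{2q-2}$; hence the single unit of slack must sit in the star, and Proposition~\ref{Sp4}(ii) gives $T_1-v_g=3P_{q-1}+P_{q-2}$, with the short branch forced onto the cycle. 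In either case both arm branches have length exactly $q-1$, the cycle is entirely absorbed into $T_1$, and the remaining $q-1$ path-trees attain their maximal sizes (their total being $(q-1)^2$ in both cases).

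Finally I would read off the leftover graph $R=G-V(T_1)$. Since $a_2=q+1$ and the second arm gives a branch of length $q-1$, it contributes a component $P_2$ (the vertices $w_q,w_{q+1}$), while the remaining trees $T_2,\dots,T_q$ must be paths whose sizes are exactly $1,3,5,\dots,2q-3$ — all distinct and odd — as forced by their total $(q-1)^2$. Because $v_g$ is the only vertex of degree greater than $2$, each $T_i$ lies inside a single component of $R$, so the component $P_2$ would have to be tiled by paths of distinct odd sizes summing to $2$, which is impossible. This contradiction yields $k\ge q+1$. The main obstacle is the cycle‑branch bookkeeping in the first case: the parity difference between $C_{2q-2}$ and $C_{2q-1}$ is what routes the slack into the star and ultimately produces the untileable $P_2$.
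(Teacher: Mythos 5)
Your proof is correct and follows essentially the same route as the paper's: force $k=q$ into the equality case of the size bounds, conclude $x_1=v_g$, $i_0=q-1$, and that $T_1$ absorbs the whole cycle plus exactly $q-1$ vertices of each arm, so the leftover graph is $P_{q^2-2q-1}+P_2$ and the $P_2$ component cannot be covered by the remaining path-trees of distinct odd sizes $1,3,\dots,2q-3$. The only (cosmetic) difference is that you reach the structural conclusion via inequality (1) plus a separate parity/fit argument on $C_{2q-2}$ versus $C_{2q-1}$, whereas the paper uses the single bound $|T_{k-i_0}|\le 2i_0+g$ to get $n\le k^2+r$ directly; both are sound.
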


\begin{proof} Since $(g,a_{1},a_{2})\in \mathcal B$, we have $2q-3\le r=g-1\le 2q-2$ and $a_1\ge a_2= q+1$. Then $|T_{k-i_0}(x_{k-i_0})|\le 2i_{0}+g$, and hence,
$n=q^2+r=\sum_{i=1}^k|T_{i}(x_i)|\le k^2+g-1=k^2+r$. Then $n=q^2+r=k^2+r$(otherwise $n=q^2+r\le k^2+r-1$, then $k\ge q+1$), which implies $k=q$,  $|T_{k-i_0}(x_{k-i_0})|=2i_{0}+g$ and $|T_{k-i}(x_{k-i})|=2i+1$ for any $i\not=i_0$, then $i_0=q-1$ and $|T_1(x_1)|=2i_0+g\in\{4(q-1),4(q-1)+1\}$. By Proposition \ref{Sp4} we can find that $x_1=v_g$, and $T_{1}(x_{1})=U_{g}^{q-1,q-1}$, then $H(v_g)=P_{q^2-2q-1}+P_{2}$, a contradiction with $|T_{k-i}(x_{k-i})|=2i+1$ for any $i\not=i_0$.
\end{proof}

\begin{lemma}
\label{q+1.7}
If $(g,a_{1},a_{2})\in \bigcup_{i=2}^{6}\mathcal C_i$, then $k\ge q+1$.
\end{lemma}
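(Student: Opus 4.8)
The plan is to argue by contradiction from the assumption $k=q$ (recall $k\ge q$ by Corollary~\ref{unicyclic3}). By Theorem~\ref{tree-partition} fix a rooted tree partition $\{T_1,\dots,T_q\}$ with heights $q-1,\dots,0$, and let $T_{k-i_0}$ be the member containing $v_g$; by the analysis preceding Theorem~\ref{aq} this member is an $SP_s(v_g)$ with $3\le s\le 4$, while every other $T_{k-i}$ is a path with $|T_{k-i}|\le 2i+1$. Two ingredients drive the argument. First, for every triple in $\bigcup_{i=2}^{6}\mathcal C_i$ one has $b(H(v_g))=q$: since $a_1,a_2\ge q$ and $g\ge 2q$ throughout these families, $H(v_g)=U_{g}^{a_1,a_2}-N_{q-1}[v_g]=P_{m_1}+P_{m_2}+P_{m_3}$ with $\{m_1,m_2,m_3\}=\{g-2q+1,a_1-q+1,a_2-q+1\}$, and by the very construction of the $\mathcal C_i$ in the proof of Lemma~\ref{q4} the triple $(m_1,m_2,m_3)$ lies in $J^2$, $J^3$, or $J^4\cup J^5$; Theorem~\ref{f3} then gives $b(H(v_g))=\lceil\sqrt{|H(v_g)|}\rceil+1=q$. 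Second, inequality~(1) constrains the partition: writing $\delta:=2q-2-r$ (so $\delta=0$ for $\mathcal C_4,\mathcal C_5,\mathcal C_6$, $\delta=1$ for $\mathcal C_3$, $\delta=2$ for $\mathcal C_2$), the bound $n\le q^2+(s-2)(i_0-p_{i_0})$ with $s\le 4$ and $i_0-p_{i_0}\le q-1$ forces $s=4$ and $i_0-p_{i_0}\ge q-1-\lfloor\delta/2\rfloor$.

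For the clean families $\mathcal C_4,\mathcal C_5,\mathcal C_6$ ($\delta=0$) this pins the partition down completely: $i_0=q-1$, $p_{i_0}=0$ (hence $x_1=v_g$) and $|T_1|=4q-3$, so Proposition~\ref{Sp4}(i) yields $T_1-v_g=4P_{q-1}$, i.e.\ $V(T_1)=N_{q-1}[v_g]$. Then $T_2,\dots,T_q$ form a burning of $H(v_g)$ in $q-1$ rounds, whence $b(H(v_g))\le q-1$, contradicting the first ingredient. This is the whole argument in the $\delta=0$ case.

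For $\mathcal C_3$ ($\delta=1$) the same count still forces $i_0=q-1$, $p_{i_0}=0$, $x_1=v_g$, but now $|T_1|\in\{4q-4,4q-3\}$. The case $|T_1|=4q-3$ is as above. When $|T_1|=4q-4$, the remaining $q-1$ paths are all maximal (their sizes sum to exactly $(q-1)^2=|G'|$, where $G'=U_{g}^{a_1,a_2}-V(T_1)$), so $G'$ must split into $P_1,P_3,\dots,P_{2q-3}$; by Proposition~\ref{Sp4}(ii) $T_1-v_g=3P_{q-1}+P_{q-2}$, so $G'$ is $H(v_g)$ with exactly one of its three paths lengthened by $1$, and a direct check against Theorem~\ref{f3} shows that for each of the sixteen triples in $\mathcal C_3$ this lengthened forest is again exceptional (burning number $q$) or fails the decomposition into $P_1,\dots,P_{2q-3}$ on a parity/multiplicity ground, contradicting $b(G')\le q-1$. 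For $\mathcal C_2$ ($\delta=2$) the count admits in addition the off-centre positions $p_{i_0}=1$ (with $i_0=q-1$) and $i_0=q-2$ (with $p_{i_0}=0$); I would treat $x_1=v_g$ as before using Proposition~\ref{Sp4}(ii)--(iii), and dispatch the degenerate positions by the rigidity they force: there $G'$ must split into maximal paths, so the mandatory long path $P_{2q-1}$ must sit inside the long cycle- or arm-remnant, leaving the two short remnants to be covered by the remaining maximal odd paths, which is impossible because it would require two equal odd lengths while each length occurs once (exactly the obstruction used in Lemmas~\ref{q+1.5} and~\ref{q+1.6}).

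The main obstacle is this last stratum, $\delta\in\{1,2\}$: once the tree at $v_g$ is allowed to be sub-maximal or off-centre, the ball-removal reduction no longer returns $H(v_g)$ verbatim, so one cannot simply invoke $b(H(v_g))=q$ and must instead verify---triple by triple over the parametric families and the sporadic lists $B_1,\dots,B_6$ and $J^5$---that every admissible near-maximal $SP_4(v_g)$ leaves a residual linear forest of burning number $q$. I expect this finite but lengthy bookkeeping, rather than any single conceptual step, to be the crux.
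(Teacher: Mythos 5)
Your proposal follows essentially the same route as the paper: assume $k=q$, use the counting inequality (1)/(5) to force $s=4$ and to pin down the size and position of the tree $T_{k-i_0}$ containing $v_g$, then observe that the remaining trees burn the residual linear forest $G^*=U_{g}^{a_1,a_2}-V(T_{k-i_0})$ in $q-1$ rounds while Theorem~\ref{f3} forces $b(G^*)=q$. Your $\delta=0$ argument for $\mathcal C_4\cup\mathcal C_5\cup\mathcal C_6$ is complete and matches the paper's. For $\mathcal C_3$ and $\mathcal C_2$, however, you only describe the verification (``a direct check\dots shows'') rather than perform it; that enumeration of the possible shapes of $T_1$ via Proposition~\ref{Sp4}(ii)--(iii) and of the resulting forests $G^*$, together with checking each against $J^2,J^3,J^4\cup J^5$, is precisely the substance of the paper's proof for those families, so as written the $\delta\in\{1,2\}$ strata remain unproved. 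One point in your favour: you explicitly flag and sketch a disposal of the configuration $i_0=q-2$, $p_{i_0}=0$ for $\mathcal C_2$ (via the impossibility of tiling the three remnants by the distinct odd path lengths $1,3,\dots,2q-5,2q-1$), a case the paper's step ``by (5), $4q-5\le|T_1(x_1)|\le 4q-3$'' passes over silently.
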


\begin{proof} If $n=q^2+r\le k^2+r-1$, then $k\ge q+1$. So we can assume $n=q^2+r\geq k^2+r$, Then $k=q$ as $k\ge q$. Let $G^*:=U_{g}^{a_{1},a_{2}}-V(T_{1}(x_{1}))$, then by Theorem \ref{b-sequ}, $b(G^*)\le q-1$.

By Proposition \ref{Sp4}, $|T_{k-i_0}(x_{k-i_0})|\le 2i_{0}+1+(s-2)(i_{0}-p_{i_{0}})$ with $s\le4$ and $p_{i_{0}}=d(v_g,x_{k-i_0})$. Hence, by (1),
$$n=q^2+r\le k^2+2(i_0-p_{i_{0}})\le k^2+2k-2=q^2+ 2q-2.\eqno(5)$$

If $(g,a_{1},a_{2})\in \bigcup_{i=4}^{6}\mathcal C_i$,  i.e., $n=q^2+ 2q-2$, then all inequalities in (5) should be equal, that is $p_{i_0}=0$, $i_0=q-1$, $|T_{1}(x_{1})|=4(q-1)+1$, and then $x_1=v_g$, $T_{1}(x_{1})-v_g=4P_{q-1}$ by Proposition \ref{Sp4}, which implies $G^*= P_{m_1}+P_{m_2}+P_{m_3}$ and $|G^*|=(q-1)^2$, where $\{m_1,m_2,m_3\}=\{g-2q+1,a_1-q+1,a_2-q+1\}$ and $m_1\ge m_2\ge m_3\ge 1$. On the other hand, since $(g,a_{1},a_{2})\in \bigcup_{i=4}^{6}\mathcal C_i$, we can check that $(m_1,m_2,m_3)\in J^4\cup J^5$, and thus, by Theorem~\ref{f3}, $b(G^*)=q$, a contradiction.

If $(g,a_{1},a_{2})\in \mathcal C_{3}$, then $n=q^2+2q-3$, and then by (5), $p_{i_0}=0$ (that is $x_1=v_g$) and $i_0=q-1$, furthermore, $|T_{1}(x_{1})|=4(q-1)$ and $T_{1}(x_{1})-v_g=3P_{q-1}+P_{q-2}$ by Proposition \ref{Sp4}, or $|T_{1}(x_{1})|=4(q-1)+1$ and $T_{1}(x_{1})-v_g=4P_{q-1}$, which implies $G^*= P_{m}+P_{m'}+P_{M}$ for some $M\ge m'\ge m\ge 1$. In the former case, $G^*\in \{P_{g-2q+2}+P_{a_1-q+1}+P_{a_2-q+1}, P_{g-2q+1}+P_{a_1-q+2}+P_{a_2-q+1}, P_{g-2q+1}+P_{a_1-q+1}+P_{a_2-q+2}\}$, $|G^*|=(q-1)^2$ and we can check that $(M,m',m)\in J^4\cup J^5$. In the later case, $G^*= P_{g-2q+1}+P_{a_1-q+1}+P_{a_2-q+1}$, $|G^*|=(q-1)^2-1$ and $(M,m',m)\in J^3$. Then in either case, $b(G^*)=q$ by Theorem~\ref{f3}, a contradiction.

If $(g,a_{1},a_{2})\in \mathcal C_{2}$, then $n=q^2+2q-4$, and then by (5), $4q-5\le |T_{1}(x_{1})|\le 4q-3$. If $|T_{1}(x_{1})|=4q-3$, then $x_1=v_g$, $T_{1}(x_{1})-v_g=4P_{q-1}$ by Proposition \ref{Sp4}, which implies $G^*\in \{ P_{q^2-2q-6}+P_3+P_2,P_{q^2-2q-5}+P_2+P_2\}$ and $|G^*|=(q-1)^2-2$, and then $b(G^*)=q$ by Theorem~\ref{f3}, a contradiction. If $|T_{1}(x_{1})|=4(q-1)$, then $x_1=v_g$, $T_{1}(x_{1})-v_g=3P_{q-1}+P_{q-2}$ by Proposition \ref{Sp4}, which implies $G^*\in \{ P_{q^2-2q-5}+P_3+P_2,P_{q^2-2q-6}+P_4+P_2,P_{q^2-2q-6}+P_3+P_3, P_{q^2-2q-4}+P_2+P_2\}$ and $|G^*|=(q-1)^2-1$, and then $b(G^*)=q$ by Theorem~\ref{f3}, a contradiction.
So $|T_{1}(x_{1})|=4q-5$. Then $|G^*|=(q-1)^2$ and $p_{i_{0}}=d(v_g,x_{1})\le 1$. If $x_1=v_g$, then $T_{1}(x_{1})-v_g\in \{3P_{q-1}+P_{q-3}, 2P_{q-1}+2P_{q-2}\}$ by Proposition \ref{Sp4}, which implies $G^*\in \{ P_{q^2-2q-6}+P_4+P_3,P_{q^2-2q-5}+P_4+P_2,P_{q^2-2q-6}+P_5+P_2, P_{q^2-2q-3}+P_2+P_2, P_{q^2-2q-4}+P_3+P_2, P_{q^2-2q-6}+P_5+P_2, P_{q^2-2q-5}+P_3+P_3\}$, and then $b(G^*)=q$ by Theorem~\ref{f3}, a contradiction. So $d(x_1,v_g)=1$, then $x_1\in\{w_1,u_1,v_1\}$ and $T_{1}(x_{1})-\{v_g,x_1\}=3P_{q-2}+P_{q-1}$. If $x_1=w_1$, then $G^*\in \{ P_{q^2-2q-4}+P_4+P_1,P_{q^2-2q-5}+P_4+P_2\}$; if $x_1=u_1$, then $G^*\in \{ P_{q^2-2q-4}+P_3+P_2,P_{q^2-2q-6}+P_4+P_3,P_{q^2-2q-7}+P_4+P_4\}$; and if $x_1=v_1$, then $G^*\in \{ P_{q^2-2q-4}+P_3+P_2,P_{q^2-2q-6}+P_4+P_3,P_{q^2-2q-5}+P_4+P_2\}$, and thus $b(G^*)=q$ by Theorem~\ref{f3}, a contradiction.
\end{proof}

By Corollary \ref{unicyclic3} and Lemmas \ref{q+1}-\ref{q+1.7}, we have

\begin{theorem}
\label{b=q+1..}
Suppose that $U_{g}^{a_{1},a_{2}}$ ($a_{1}\ge a_{2}$) is a unicyclic graph  of order $n=q^2+r$ with $1\le r\le 2q+1$. If $r\geq 2q-1$ or $3\le g\le r$ or $g\geq q^{2}+1$ or $a_{1}\geq q^{2}-\lfloor\frac{g}{2}\rfloor$, or $2q\le g\le q^{2}$ and $a_{2}\le r-q$,
or $(g,a_{1},a_{2})\in \bigcup_{i=1}^{6}\mathcal C_{i}\cup \mathcal B$, then $b(U_{g}^{a_{1},a_{2}})= \left\lceil\sqrt{n}\right\rceil=q+1$.
\end{theorem}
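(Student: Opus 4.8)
The plan is to read the conclusion off directly from Corollary \ref{unicyclic3} together with the six lemmas just established. By Corollary \ref{unicyclic3}, every $G=U_g^{a_1,a_2}\in\mathcal U_{n,g}^{(2)}$ with $n=q^2+r$ and $1\le r\le 2q+1$ already satisfies $q\le b(G)\le q+1$; moreover $q^2<n\le (q+1)^2$ forces $\lceil\sqrt n\rceil=q+1$. Hence the entire content to be verified is the lower bound $b(G)\ge q+1$, that is, that the length $k$ of an optimal burning sequence is at least $q+1$, and it suffices to check this separately under each of the listed disjuncts.

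The verification is then pure bookkeeping: each hypothesis in the statement matches exactly one of Lemmas \ref{q+1}--\ref{q+1.7}, and that lemma supplies $k\ge q+1$. Concretely, the case $r\ge 2q-1$ is Lemma \ref{q+1}; the cases $3\le g\le r$ and $a_1\ge q^2-\lfloor g/2\rfloor$ are covered by the lemma asserting $k\ge q+1$ under exactly those two hypotheses; the case $g\ge q^2+1$ by the subsequent lemma bearing that hypothesis; the case $2q\le g\le q^2$ with $a_2\le r-q$, together with $(g,a_1,a_2)\in\mathcal C_1$, by Lemma \ref{q+1.5}; the case $(g,a_1,a_2)\in\mathcal B$ by Lemma \ref{q+1.6}; and finally $(g,a_1,a_2)\in\bigcup_{i=2}^6\mathcal C_i$ by Lemma \ref{q+1.7}. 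In every instance $k\ge q+1$, and combining this with $b(G)=k\le q+1$ from Corollary \ref{unicyclic3} yields $b(G)=q+1=\lceil\sqrt n\rceil$.

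Since all the real work has already been discharged by the lemmas, the only thing that must be guarded against is a gap between the disjuncts of the statement and the lemma hypotheses. The point worth checking is that the residual range $r\le 2q-2$, on which every lemma except Lemma \ref{q+1} operates (as noted immediately after that lemma), is exactly the regime left open once $r\ge 2q-1$ has been removed, so that no value of $r$ in $[1,2q+1]$ escapes the analysis; the remaining disjuncts on $g$, $a_1$, $a_2$ and the exceptional families $\mathcal B,\mathcal C_1,\dots,\mathcal C_6$ then line up one-to-one with the lemmas. I expect no genuine obstacle at the level of this theorem: its difficulty lives entirely inside Lemma \ref{q+1.7}, whose proof combines the structural Proposition \ref{Sp4} with the three-path formula of Theorem \ref{f3} over the large exceptional sets, whereas the present statement is merely the assembly of those lemmas.
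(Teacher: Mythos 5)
Your proposal is correct and matches the paper exactly: the paper's own proof of Theorem \ref{b=q+1..} is the one-line assembly "By Corollary \ref{unicyclic3} and Lemmas \ref{q+1}--\ref{q+1.7}," and your case-by-case matching of each disjunct to its lemma, together with the observation that $q^2<n\le(q+1)^2$ gives $\lceil\sqrt n\rceil=q+1$, is precisely what that assembly requires. No gaps.
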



\noindent{\bf Note 2. } By Theorems \ref{b=q} and \ref{b=q+1..}, $b(U_{g}^{a_1,a_2})$ are listed in Table 2, where $g+a_1+a_2=q^2+r$ and $1\le r\le 2q+1$.



\begin{table}[!hbp]
\centering
\vskip.2cm
\begin{tabular}{|c|c|c|c|}
\hline
\hline
$r$ & $g$  & $a_{1},a_{2}$  & $b(U_{g}^{a_{1},a_{2}})$ \\
\hline

\multirow{9}{*}{$1\le r\le 2q-2$} & $3\le g\le r$  & $$ &$q+1$\\
\cline{2-4}
\multirow{9}{*}{} & \multirow{4}{*}{$r+1\le g\le 2q-1$} & $a_{1}\geq q^2-\lfloor\frac{g}{2}\rfloor$~or~ &$\multirow{2}{*}{$q+1$}$\\
\multirow{9}{*}{} & \multirow{4}{*}{} & $(g,a_{1},a_{2})\in \mathcal B$  &$$\\
\cline{3-4}
\multirow{9}{*}{} & \multirow{4}{*}{} & $a_{1}\le q^2-\lfloor\frac{g}{2}\rfloor-1$~and~ &$\multirow{2}{*}{$q$}$\\
\multirow{9}{*}{} & \multirow{4}{*}{} & $(g,a_{1},a_{2})\notin \mathcal B$  &$$\\
\cline{2-4}
\multirow{9}{*}{} & \multirow{4}{*}{$2q\le g\le q^{2}$} & $a_{2}\le r-q$~or~ &$\multirow{2}{*}{$q+1$}$\\
\multirow{9}{*}{} & \multirow{4}{*}{} & $(g,a_{1},a_{2})\in \bigcup_{i=1}^6 \mathcal C_i$ &$$\\
\cline{3-4}
\multirow{9}{*}{} & \multirow{4}{*}{}  & $a_{2}\geq r-q+1$~and~ &\multirow{2}{*}{$q$}\\
\multirow{9}{*}{} & \multirow{4}{*}{}  & $(g,a_{1},a_{2})\notin \bigcup_{i=1}^6 \mathcal C_i$ &$$\\
\cline{2-4}

\multirow{9}{*}{} & $g\geq q^{2}+1$  & $$ &$q+1$\\

\hline
$2q-1\le r\le 2q+1$ & \multicolumn{2}{|c|}{} &$q+1$\\
\hline

\end{tabular}
\caption{Burning number of $U_{g}^{a_{1},a_{2}}$}
\end{table}

\end{document}